 \newtheorem{thm}{Theorem}[section]
 \newtheorem{cor}[thm]{Corollary}
 \newtheorem{lem}[thm]{Lemma}
 \newtheorem{prop}[thm]{Proposition}
 \theoremstyle{definition}
 \newtheorem{defn}[thm]{Definition}
 \newtheorem{ex}[thm]{Example}
 \theoremstyle{remark}
 \numberwithin{equation}{section}
\begin{document}

%
%
%
%
%
%
%
%
%

\title[Some results of $K-$frames and their multipliers]
 {Some results of $K$-frames and their multipliers}


\author[M.Shamsabadi]{Mitra Shamsabadi}
\address{Department of Mathematics and Computer Sciences, Hakim Sabzevari University, Sabzevar, Iran.}
\email{mi.shamsabadi@hsu.ac.ir}

\author[A.Arefijamaal]{Ali Akbar Arefijamaal}
\address{Department of Mathematics and Computer Sciences, Hakim Sabzevari University, Sabzevar, Iran.}
\email{arefijamaal@hsu.ac.ir}

\subjclass[2010]{Primary 42C15; Secondary 41A58}
\keywords{ $K$-frame; $K$-dual; $K$-left inverse; $K$-right inverse; multiplier}

\begin{abstract}
$K$-frames are strongly tools for the reconstruction elements from the range of a  bounded linear operator $K$ on a separable Hilbert space $\mathcal{H}$. In this paper, we study some properties of $K$-frames and  introduce the $K$-frame multipliers. We also focus to represent elements from the range of $K$  by $K$-frame multipliers.
\end{abstract}

\maketitle

\section{Introduction, notation and motivation}
For the first time, frames in Hilbert space were offered by Duffin and Schaeffer in 1952 and were brought to life by Daubechies, Grossman and  Meyer \cite{Gros}.
A frame allows each element in the underlying space to be written as a linear combination of the frame elements, but linear independence between the frame elements is not required. This fact has key role in applications as signal processing, image processing, coding theory and more. For more details and applications of ordinary frames see \cite{Ar13,Ar-BIMS,Bod05,Bol98,Cas00,Chr08}.  $K$-frames which recently introduced by G\v{a}vru\c{t}a are generalization of frames, in the meaning that the lower frame bound only holds for that admits to reconstruct from the range of a linear and bounded operator $K$ in a Hilbert space \cite{Gav07}.

A sequence $m:=\{m_{i}\}_{i\in I}$ of complex scalars is called \textit{semi-normalized} if there exist constants $a$ and $b$ such that
$0<a\leq |m_{i}|\leq b<\infty$, for all $i\in I$. For two sequences $\Phi:=\{\phi_{i}\}_{i\in I}$ and
 $\Psi:=\{\psi_{i}\}_{i\in I}$ in a Hilbert space $\mathcal{H}$
 and a sequence $m$ of complex scalars, the operator
  $\mathbb{M}_{m,\Phi,\Psi}:\mathcal {H}\rightarrow \mathcal{H}$ given by
\begin{eqnarray*}
\mathbb{M}_{m,\Phi,\Psi}f=\sum_{i\in I} m_{i}\langle f,\psi_{i}\rangle\varphi_{i}, \qquad  (f\in\mathcal{H}),
\end{eqnarray*}
is called a \textit{multiplier}. The sequence $m$ is called the
\textit{symbol}. If $\Phi$ and $\Psi$ are Bessel sequences for
 $\mathcal{H}$ and $m\in \ell^{\infty}$, then $\mathbb{M}_{m,\Phi,\Psi}$
  is well-defined and $\|\mathbb{M}_{m,\Phi,\Psi}\|\leq \sqrt{B_{\Phi}B
  _{\Psi}}\|m\|_{\infty}$ where $B_{\Phi}$ and $B_{\Psi}$ are Bessel bounds of $\Phi$ and $\Psi$, respectively \cite{Basic}. Frame multipliers have so applications in psychoacoustical modeling and denoising \cite{lab,maj}. Also, several generalizations of multipliers are proposed \cite{Ra,p,shams}.

 It is important to detect the inverse of a multiplier if it exists \cite{rep,inv}. Our aim is to introduce $K$-frame multipliers and apply them to reconstruct elements from the range of $K$.

Throughout this paper, we suppose that $\mathcal{H}$ is a separable Hilbert space, $I$ a countable index set and $I_{\mathcal{H}}$  the identity operator on $\mathcal{H}$. For two Hilbert spaces $\mathcal{H}_{1}$ and $\mathcal{H}_{2}$ we denote by $B(\mathcal{H}_{1},\mathcal{H}_{2})$ the collection of all bounded linear operators between $\mathcal{H}_{1}$ and $\mathcal{H}_{2}$, and we abbreviate $B(\mathcal{H},\mathcal{H})$ by $B(\mathcal{H})$. Also we denote the range of $K\in B(\mathcal{H})$ by $R(K)$ and $\pi_{V}$ denotes the orthogonal projection of $\mathcal{H}$ onto a closed subspace $V \subseteq \mathcal{H}$.

We end this section by  the following proposition.
\begin{prop}\label{k}\cite{Gav07}
Let $L_{1}\in B(\mathcal{H}_{1},\mathcal{H})$ and $L_{2}\in B(\mathcal{H}_{2},\mathcal{H})$ be two bounded operators. The following statements  are equivalent:
\begin{enumerate}
\item \label{Re1} $R(L_{1})\subset R(L_{2})$.
\item \label{Re2} $L_{1}L_{1}^{*}\leq \lambda^{2}L_{2}L_{2}^{*}$
for some $\lambda\geq 0$.
\item \label{Re3} there exists a bounded operator $X\in B(\mathcal{H}_{1},\mathcal{H}_{2})$ so that $L_{1}=L_{2}X$
\end{enumerate}
\end{prop}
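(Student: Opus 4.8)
The plan is to prove the equivalences through the cycle of implications $(\ref{Re3})\Rightarrow(\ref{Re1})$, $(\ref{Re3})\Rightarrow(\ref{Re2})$, $(\ref{Re1})\Rightarrow(\ref{Re3})$ and $(\ref{Re2})\Rightarrow(\ref{Re3})$; together these yield $(\ref{Re1})\Leftrightarrow(\ref{Re3})\Leftrightarrow(\ref{Re2})$. The two implications issuing from $(\ref{Re3})$ are immediate. If $L_{1}=L_{2}X$, then every element of $R(L_{1})$ is of the form $L_{2}(Xf)$, so $R(L_{1})\subseteq R(L_{2})$; moreover $L_{1}L_{1}^{*}=L_{2}XX^{*}L_{2}^{*}\leq \|X\|^{2}L_{2}L_{2}^{*}$, since $XX^{*}\leq \|X\|^{2}I_{\mathcal{H}_{2}}$ and conjugation by $L_{2}(\cdot)L_{2}^{*}$ preserves operator inequalities, so $(\ref{Re2})$ holds with $\lambda=\|X\|$.

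For $(\ref{Re1})\Rightarrow(\ref{Re3})$ I would construct $X$ pointwise and then invoke the closed graph theorem. Given $f\in\mathcal{H}_{1}$, the inclusion $R(L_{1})\subseteq R(L_{2})$ furnishes some $g\in\mathcal{H}_{2}$ with $L_{2}g=L_{1}f$; selecting the unique such $g$ lying in $N(L_{2})^{\perp}$ (the orthogonal complement of the null space of $L_{2}$) defines $Xf:=g$, and linearity is routine from the uniqueness of this choice. Boundedness follows from the closed graph theorem: if $f_{n}\to f$ and $Xf_{n}\to h$, then continuity of $L_{1}$ and $L_{2}$ together with $L_{1}f_{n}=L_{2}(Xf_{n})$ forces $L_{1}f=L_{2}h$, while $h\in N(L_{2})^{\perp}$ because this subspace is closed; uniqueness of the preimage in $N(L_{2})^{\perp}$ then gives $h=Xf$.

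The main work, and the step I expect to be the principal obstacle, is $(\ref{Re2})\Rightarrow(\ref{Re3})$. Rewriting $(\ref{Re2})$ as $\|L_{1}^{*}f\|^{2}\leq \lambda^{2}\|L_{2}^{*}f\|^{2}$ for all $f\in\mathcal{H}$, I would define an operator $Y$ on the range $R(L_{2}^{*})\subseteq\mathcal{H}_{2}$ by $Y(L_{2}^{*}f):=L_{1}^{*}f$. This norm inequality is precisely what guarantees both that $Y$ is well defined (if $L_{2}^{*}f=L_{2}^{*}f'$ then $\|L_{1}^{*}(f-f')\|\leq \lambda\|L_{2}^{*}(f-f')\|=0$) and that it is bounded with $\|Y\|\leq\lambda$ on $R(L_{2}^{*})$. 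Extending $Y$ by continuity to $\overline{R(L_{2}^{*})}=N(L_{2})^{\perp}$ and by zero on $N(L_{2})$ produces $Y\in B(\mathcal{H}_{2},\mathcal{H}_{1})$ with $YL_{2}^{*}=L_{1}^{*}$; taking adjoints gives $L_{2}Y^{*}=L_{1}$, so $X:=Y^{*}$ is the desired factorization. The delicate points are the well-definedness of $Y$ on the generally non-closed range $R(L_{2}^{*})$ and keeping track of which space each adjoint inhabits, but both are governed entirely by the operator inequality in $(\ref{Re2})$.
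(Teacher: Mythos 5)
Your proposal is correct, and all three nontrivial steps are sound. A point of context: the paper gives no proof of this proposition at all---it is quoted from \cite{Gav07}, and is in fact Douglas' classical majorization--factorization theorem---so there is no internal argument to compare yours against. What you have reconstructed is the standard proof of Douglas' theorem, organized around (3) as the hub: the two implications out of (3) are immediate, exactly as you say (with $L_{2}XX^{*}L_{2}^{*}\leq\|X\|^{2}L_{2}L_{2}^{*}$ following from $XX^{*}\leq\|X\|^{2}I_{\mathcal{H}_{2}}$ and conjugation); (1)$\Rightarrow$(3) works because each coset $g+N(L_{2})$ meets the closed subspace $N(L_{2})^{\perp}$ in exactly one point, which gives both linearity of $X$ and the uniqueness needed to close the graph; and (2)$\Rightarrow$(3) correctly exploits that the inequality $\|L_{1}^{*}f\|\leq\lambda\|L_{2}^{*}f\|$ is exactly what makes $Y(L_{2}^{*}f):=L_{1}^{*}f$ well defined and bounded on the possibly non-closed range $R(L_{2}^{*})$, after which extension by continuity to $\overline{R(L_{2}^{*})}=N(L_{2})^{\perp}$, extension by zero on $N(L_{2})$, and passage to adjoints yield $L_{1}=L_{2}Y^{*}$. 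The degenerate case $\lambda=0$ is also covered by your argument, since then $L_{1}=0$ and $Y=0$ suffices. In short: a complete and correct proof of a result the paper only cites.
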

  \section{$K$-frames}

Let $\mathcal{H}$ be a separable Hilbert space,  a sequence $F:=\lbrace f_{i}\rbrace_{i \in I} \subseteq \mathcal{H}$ is called a \textit{$K$-frame} for $\mathcal{H}$, if there exist constants $A, B > 0$ such that
\begin{eqnarray}\label{1}
A \Vert K^{*}f\Vert^{2} \leq \sum_{i\in I} \vert \langle f,f_{i}\rangle\vert^{2} \leq B \Vert f\Vert^{2}, \quad (f\in \mathcal{H}).
\end{eqnarray}

Clearly if $K=I_{\mathcal{H}}$, then $F$ is an ordinary frame. The constants $A$ and $B$ in $(\ref{1})$ are called lower and upper bounds of $F$, respectively. $\{f_{i}\}_{i\in I}$ is called \textit{$A$-tight}, if $A\|K^{*}f\|^{2}=\sum_{i\in I} \vert \langle f,f_{i}\rangle\vert^{2}$.
Also, if $\|K^{*}f\|^{2}=\sum_{i\in I} \vert \langle f,f_{i}\rangle\vert^{2}$ we call $F,$ a \textit{Parseval $K$-frame}. Obviously every $K$-frame is a Bessel sequence, hence similar to ordinary frames the \textit{synthesis operator} can be defined as $T_{F}: l^{2}\rightarrow \mathcal{H}$; $T_{F}(\{ c_{i}\}_{i\in I}) = \sum_{i\in I} c_{i}f_{i}$. It is a bounded operator and its adjoint which is called the \textit{analysis operator} given by $T_{F}^{*}(f)= \{ \langle f,f_{i}\rangle\}_{i\in I}$. Finally, the \textit{frame operator} is given by $S_{F}: \mathcal{H} \rightarrow \mathcal{H}$; $S_{F}f = T_{F}T_{F}^{*}f = \sum_{i\in I}\langle f,f_{i}\rangle f_{i}$. Many properties of ordinary frames are not hold for K-frames, for example, the frame operator of a K-frame is not invertible in general. It is worthwhile to mention that if $K$ has close range then $S_{F}$ from $R(K)$ onto $S_{F}(R(K))$ is an invertible operator \cite{Xiao}. In particular,
\begin{eqnarray}\label{bound S}
B^{-1} \| f\| \leq \|S_{F}^{-1}f\| \leq A^{-1}\|K^{\dag}\|^{2}\| f\|, \quad (f\in S_{F}(R(K))),
\end{eqnarray}
where $K^{\dag}$ is the pseudo-inverse of $K$. For further information in $K$-frames refer to \cite{arab,Gav07,Xiao}.
Suppose $\lbrace f_{i} \rbrace_{i\in I}$ is a Bessel sequence. Define $K:\mathcal{H}\rightarrow \mathcal{H}$ by $Ke_{i} = f_{i}$ for all $i\in I$ where $\{e_{i}\}_{i\in I}$ is an orthonormal basis of $\mathcal{H}$. By using Lemma 3.3.6 of \cite{Chr08} $K$  has a unique extension to a bounded operator on $\mathcal{H}$, so $\lbrace f_{i}\rbrace_{i\in I}$ is a $K$-frame for $\mathcal{H}$ by Corollary 3.7 of \cite{Xiao}.
Thus every Bessel sequence is a $K$-frame for some bounded operator $K$. Conversely, every frame sequence $\lbrace f_{i} \rbrace_{i\in I}$ can be considered as a $K$-frame. In fact, let $\lbrace f_{i} \rbrace_{i\in I}$ be a frame sequence with bounds $A$ and $B$, respectively and $K = \pi_{\mathcal{H}_{0}}$, where $H_{0} = \overline{span}_{i\in I}\lbrace f_{i}\rbrace$, then for every $f\in \mathcal{H}$
\begin{eqnarray*}
A\| K^{*}f \|^{2} &\leq& \sum_{i\in I} \left|\langle K^{*}f,f_{i}\rangle \right|^{2}\\ &=& \sum_{i\in I} \left|\langle f,f_{i}\rangle\right|^{2}\\
&\leq& B \| f\|^{2}.
\end{eqnarray*}
In the following proposition we state $K$-frames with respect to their synthesis operator.

\begin{prop}\label{onto}
A sequence $F=\{f_{i}\}_{i\in I}$ is a $K$-frame if and only if
\begin{eqnarray*}
T_{F}:\ell^{2}\rightarrow R(T_{F});\quad\{c_{i}\}_{i\in I}\mapsto \sum_{i\in I}c_{i}f_{i},
\end{eqnarray*}
is a well defined  and $R(K)\subseteq R(T_{F})
$.
\end{prop}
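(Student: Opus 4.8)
The plan is to separate the defining inequality $(\ref{1})$ into its two halves and match each half to one of the two asserted conditions, with Proposition \ref{k} serving as the bridge for the lower bound. The upper (Bessel) bound $\sum_{i\in I}|\langle f,f_i\rangle|^2\le B\|f\|^2$ is, by the standard synthesis/analysis duality for Bessel sequences (Christensen's book), exactly the assertion that $T_F$ is a well-defined bounded operator from $\ell^2$ into $\mathcal{H}$. So I would first record the equivalence ``$T_F$ well defined'' $\iff$ the right-hand inequality of $(\ref{1})$, and note that once $T_F$ is bounded its adjoint is the analysis operator $T_F^*f=\{\langle f,f_i\rangle\}_{i\in I}$, whence $\sum_{i\in I}|\langle f,f_i\rangle|^2=\|T_F^*f\|^2=\langle T_F T_F^*f,f\rangle$ and likewise $\|K^*f\|^2=\langle KK^*f,f\rangle$.

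Granting boundedness of $T_F$, I would then rewrite the lower bound. Via the identities above, $A\|K^*f\|^2\le\sum_{i\in I}|\langle f,f_i\rangle|^2$ reads $A\langle KK^*f,f\rangle\le\langle T_F T_F^*f,f\rangle$ for every $f\in\mathcal{H}$, i.e. the operator inequality $KK^*\le A^{-1}\,T_F T_F^*$. This is precisely condition $(\ref{Re2})$ of Proposition \ref{k} applied to $L_1=K\in B(\mathcal{H},\mathcal{H})$ and $L_2=T_F\in B(\ell^2,\mathcal{H})$, with $\lambda^2=A^{-1}$, both mapping into the common space $\mathcal{H}$.

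For the forward direction I assume $F$ is a $K$-frame: the upper bound makes $T_F$ well defined and bounded, and the lower bound gives $KK^*\le A^{-1}T_F T_F^*$, so the implication $(\ref{Re2})\Rightarrow(\ref{Re1})$ of Proposition \ref{k} yields $R(K)\subseteq R(T_F)$. For the converse I assume $T_F$ is well defined (hence bounded, giving the Bessel bound) and $R(K)\subseteq R(T_F)$; then $(\ref{Re1})\Rightarrow(\ref{Re2})$ produces some $\lambda\ge 0$ with $KK^*\le\lambda^2 T_F T_F^*$, and setting $A=\lambda^{-2}$ recovers the lower bound, so $F$ is a $K$-frame.

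The step requiring the most care is the passage through Proposition \ref{k}, which is stated for bounded operators only: I must therefore secure boundedness of $T_F$ from well-definedness \emph{before} invoking it, and keep track of the constant correspondence $\lambda^2=A^{-1}$. The only edge case is $K=0$ (equivalently $\lambda=0$), where both $R(K)\subseteq R(T_F)$ and the lower bound hold trivially for any $A>0$; this is consistent and needs only a one-line remark.
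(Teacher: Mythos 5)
Your proof is correct, and your forward direction coincides with the paper's: both convert the lower $K$-frame inequality into the operator inequality $A\langle KK^{*}f,f\rangle\leq\langle T_{F}T_{F}^{*}f,f\rangle$, i.e. $KK^{*}\leq A^{-1}T_{F}T_{F}^{*}$, and invoke the Douglas-type Proposition \ref{k} in the direction (2)$\Rightarrow$(1) to conclude $R(K)\subseteq R(T_{F})$ (the paper cites ``Theorem \ref{K}'' there, but it evidently intends Proposition \ref{k}). Where you genuinely diverge is the converse. You apply the same proposition in the opposite direction, (1)$\Rightarrow$(2): the range inclusion yields $KK^{*}\leq\lambda^{2}T_{F}T_{F}^{*}$ for some $\lambda\geq 0$, and $A=\lambda^{-2}$ serves as a lower $K$-frame bound, with the degenerate case $\lambda=0$ (forcing $K=0$) noted separately. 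The paper instead argues by hand: it introduces the pseudo-inverse $T_{F}^{\dag}\colon R(T_{F})\rightarrow\ell^{2}$, writes $Kf=T_{F}T_{F}^{\dag}Kf$, and estimates $\|K^{*}f\|^{4}$ by Cauchy--Schwarz to obtain the explicit lower bound $\left(\|T_{F}^{\dag}\|^{2}\|K\|^{2}\right)^{-1}$. The trade-off is clear: the paper's route produces a concrete constant, while yours only asserts existence of some $A>0$; on the other hand, your route is symmetric (one lemma used in both directions) and quietly sidesteps a gap the paper glosses over, namely that a bounded pseudo-inverse of $T_{F}$ exists only when $R(T_{F})$ is closed, a hypothesis nowhere stated, whereas Proposition \ref{k} needs no closed-range assumption. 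Your care in securing boundedness of $T_{F}$ from mere well-definedness (Christensen's uniform-boundedness argument) before applying Proposition \ref{k} matches the paper's own use of that fact and is indeed necessary, since the proposition is stated for bounded operators.
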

\begin{proof}
First, suppose that $F$ is a $K$-frame. Then   $T_{F}$ is
well defined and bounded by  \cite[Theorem 5.4.1]{Chr08}. Moreover, the lower $K$-frame condition follows that
\begin{eqnarray*}
A\langle KK^{*}f,f\rangle&=&A\|K^{*}f\|^{2}\\
&\leq& \|T_{F}^{*}f\|^{2}
= \langle T_{F}T_{F}^{*}f,f\rangle.
\end{eqnarray*}
Applying Theorem \ref{K} yields
\begin{eqnarray*}
R(K)\subseteq R(T_{F}).
\end{eqnarray*}
For the opposite direction, suppose that $T_F$ is a
well defined operator from $\ell^{2}$ to
$R(T_{F})$. Then
\cite[Lemma 3.1.1 ]{Chr08} shows that $F$ is a Bessel sequence. Assume that
$T_{F}^{\dag}:R(T_{F})\rightarrow\ell^{2}$ is the pseudo-inverse of
$T_{F}$. Since $R(K)\subseteq R(T_{F})$,  for every $f\in \mathcal{H}$ we obtain
\begin{eqnarray*}
Kf=T_{F}T_{F}^{\dag}Kf.
\end{eqnarray*}
This follows that
\begin{eqnarray*}
\|K^{*}f\|^{4}&=&\left|\left\langle K^{*}f,K^{*}f\right\rangle\right|^{2}\\
&=&\left|\left\langle K^{*}(T_{F}^{\dag})^{*}T_{F}^{*}f,K^{*}f\right\rangle\right|^{2}\\
&\leq&\|K^{*}\|^{2}\|T_{F}^{\dag}\|^{2}\|T_{F}^{*}f\|^{2}\|K^{*}f\|^{2}.
\end{eqnarray*}
Hence,
\begin{eqnarray*}
\frac{1}{\|T_{F}^{\dag}\|^{2}\|K\|^{2}}\|K^{*}f\|^{2}\leq
\sum_{i\in I}|\langle f,f_{i}\rangle|^{2}.
\end{eqnarray*}
\end{proof}

\begin{defn}
Let $\{ f_{i} \}_{i\in I}$ be a Bessel sequence. A Bessel sequence $\{ g_{i}\}_{i \in I}\subseteq \mathcal{H}$ is called a \textit{$K$-dual} of $\{ f_{i} \}_{i\in I}$ if
\begin{eqnarray}\label{dual1}
Kf = \sum_{i\in I} \langle f,g_{i}\rangle \pi_{R(K)}f_{i}, \quad (f\in \mathcal{H}).
\end{eqnarray}
\end{defn}
\begin{lem}
If  $G:=\{g_{i}\}_{i\in I}$ is a $K$-dual of a Bessel sequence $F:=\{f_{i}\}_{i\in I}$ in $\mathcal{H}$, then $\{g_{i}\}_{i\in I}$ is  a $K^{*}$-frame and $\left\{\pi_{R(K)}f_{i}\right\}_{i\in I}$ is a $K$-frame for $\mathcal{H}$.
\end{lem}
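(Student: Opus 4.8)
The plan is to read the $K$-dual relation \eqref{dual1} as a factorization of $K$ through the synthesis operators of the two sequences, and then to read off both frame inequalities from the boundedness of those operators. Write $T_{G}$ for the synthesis operator of $G=\{g_i\}_{i\in I}$ and $T_{\Pi F}$ for the synthesis operator of $\Pi F:=\{\pi_{R(K)}f_i\}_{i\in I}$. Since $T_G^{*}f=\{\langle f,g_i\rangle\}_{i\in I}$, the defining identity $Kf=\sum_{i\in I}\langle f,g_i\rangle\,\pi_{R(K)}f_i$ says precisely that $K=T_{\Pi F}T_G^{*}$, and hence, taking adjoints, $K^{*}=T_{G}T_{\Pi F}^{*}$.

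First I would dispose of the two upper (Bessel) bounds. The sequence $G$ is Bessel by hypothesis, so $\sum_{i\in I}|\langle f,g_i\rangle|^{2}\le B_G\|f\|^{2}$ for some $B_G>0$; this is the upper $K^{*}$-frame bound. For $\Pi F$, the self-adjointness of $\pi_{R(K)}$ together with the Bessel bound $B_F$ of $F$ gives $\sum_{i\in I}|\langle f,\pi_{R(K)}f_i\rangle|^{2}=\sum_{i\in I}|\langle \pi_{R(K)}f,f_i\rangle|^{2}\le B_F\|\pi_{R(K)}f\|^{2}\le B_F\|f\|^{2}$, so $\Pi F$ is Bessel (with bound at most $B_F$) and in particular $T_{\Pi F}$ is a bounded operator with $\|T_{\Pi F}\|\le\sqrt{B_F}$.

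Next I would extract the two lower bounds from the factorization. From $Kf=T_{\Pi F}T_G^{*}f$ I obtain $\|Kf\|\le\|T_{\Pi F}\|\,\|T_G^{*}f\|=\|T_{\Pi F}\|\bigl(\sum_{i\in I}|\langle f,g_i\rangle|^{2}\bigr)^{1/2}$, which rearranges to $\|T_{\Pi F}\|^{-2}\|Kf\|^{2}\le\sum_{i\in I}|\langle f,g_i\rangle|^{2}$; since $(K^{*})^{*}=K$, this is exactly the lower $K^{*}$-frame inequality for $G$, completing the first claim. Symmetrically, applying the adjoint identity $K^{*}=T_{G}T_{\Pi F}^{*}$ yields $\|K^{*}f\|\le\|T_{G}\|\,\|T_{\Pi F}^{*}f\|=\|T_{G}\|\bigl(\sum_{i\in I}|\langle f,\pi_{R(K)}f_i\rangle|^{2}\bigr)^{1/2}$, hence $\|T_{G}\|^{-2}\|K^{*}f\|^{2}\le\sum_{i\in I}|\langle f,\pi_{R(K)}f_i\rangle|^{2}$, which together with the Bessel bound above shows that $\Pi F$ is a $K$-frame.

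I do not expect a serious obstacle here; the points requiring care are essentially clerical. One must verify that $\Pi F$ is genuinely Bessel so that $T_{\Pi F}$ is bounded (otherwise the factorization is merely formal), and one must keep straight that the lower bound of a $K^{*}$-frame is phrased in terms of $\|(K^{*})^{*}f\|=\|Kf\|$, so that the inequality produced by the factorization matches the definition in \eqref{1}. An alternative to the factorization is a direct Cauchy–Schwarz estimate on $\|Kf\|^{2}=\sum_{i\in I}\langle f,g_i\rangle\langle\pi_{R(K)}f_i,Kf\rangle$, using $\pi_{R(K)}Kf=Kf$ to replace $\langle\pi_{R(K)}f_i,Kf\rangle$ by $\langle f_i,Kf\rangle$ and then invoking the Bessel bound of $F$; but the operator factorization is cleaner and settles both claims at once.
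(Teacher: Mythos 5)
Your proof is correct. The route differs from the paper's in packaging rather than in substance: the paper obtains the lower bound for $G$ by expanding $\|Kf\|^{4}=\left|\left\langle \sum_{i\in I}\langle f,g_{i}\rangle\pi_{R(K)}f_{i},Kf\right\rangle\right|^{2}$, applying Cauchy--Schwarz in $\ell^{2}$, and invoking the Bessel bound of $F$ at the point $Kf$ (and then argues symmetrically for $\Pi F:=\{\pi_{R(K)}f_{i}\}_{i\in I}$ using the Bessel bound of $G$ at $K^{*}f$) --- this is exactly the ``alternative'' you sketch in your closing paragraph --- whereas your main argument promotes the dual identity to the operator factorization $K=T_{\Pi F}T_{G}^{*}$, $K^{*}=T_{G}T_{\Pi F}^{*}$ and reads both lower bounds off from norm submultiplicativity. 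The two arguments rest on the same ingredients (the dual relation plus the Bessel bounds $B_{F}$ and $B_{G}$) and yield the same constants $1/B_{F}$ and $1/B_{G}$. Yours has two small advantages: it settles both claims symmetrically by taking a single adjoint, and it avoids the division by $\|Kf\|^{2}$ in the paper's computation (which silently requires the trivial case $Kf=0$ to be treated separately). The price is that you must first check that $\Pi F$ is Bessel so that $T_{\Pi F}$ is a bounded operator and the factorization is more than formal --- a point you correctly flag and verify via $\sum_{i\in I}|\langle f,\pi_{R(K)}f_{i}\rangle|^{2}=\sum_{i\in I}|\langle \pi_{R(K)}f,f_{i}\rangle|^{2}\leq B_{F}\|f\|^{2}$, which the paper's proof uses only implicitly.
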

\begin{proof}
For all $f\in \mathcal{H}$ we have
\begin{eqnarray*}
\left\|Kf\right\|^{4}&=&\left|\left\langle Kf,Kf\right\rangle\right|^{2}\\
&=&\left|\left\langle \sum_{i\in I} \langle f,g_{i}\rangle \pi_{R(K)}f_{i},Kf\right\rangle\right|^{2}\\
&\leq& \sum_{i\in I}\left|\left\langle f,g_{i}\right\rangle\right|^{2} \sum_{i\in I}\left|\left\langle Kf,f_{i}\right\rangle\right|^{2}\\
&\leq& B_{F} \left\|Kf\right\|^{2}\sum_{i\in I}\left|\left\langle f,g_{i}\right\rangle\right|^{2},
\end{eqnarray*}
where $B_{F}$ is an upper bound of $\{f_{i}\}_{i\in I}$.
Hence,  $\{g_{i}\}_{i\in I}$ satisfies in the lower $K$-frame bound and
\begin{eqnarray*}
\frac{1}{B_{F}}\left\|Kf\right\|^{2}\leq \sum_{i\in I}\left|\left\langle f,g_{i}\right\rangle\right|^{2}.
\end{eqnarray*}

 In the same way, we obtain
\begin{eqnarray*}
\frac{1}{B_{G}}\left\|K^{*}f\right\|^{2}\leq \sum_{i\in I}\left|\left\langle f,\pi_{R(K)}f_{i}\right\rangle\right|^{2},
\end{eqnarray*}
where $B_{G}$ is an upper bound of $\{g_{i}\}_{i\in I}$.
\end{proof}
Now, we present a $K$-dual for every $K$-frame.
\begin{prop}
Let $K\in B(\mathcal{H})$ with closed range and $F=\{f_{i}\}_{i\in I}$ be a $K$-frame with bounds $A$ and $B$, respectively. Then $\left\{K^{*}(S_{F}\mid_{R(K)})^{-1}\pi_{S_{F}(R(K))}f_{i}\right\}_{i\in I}$ is a $K$-dual of $F$ with the bounds $B^{-1}$ and $BA^{-1}\|K\|^{2}\|K^{\dag}\|^{2}$, respectively,
\end{prop}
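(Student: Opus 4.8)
The plan is to verify two things: first, that the claimed sequence $\left\{g_i\right\}_{i\in I}$ with $g_i := K^{*}(S_{F}\mid_{R(K)})^{-1}\pi_{S_{F}(R(K))}f_i$ is a Bessel sequence, and second, that it satisfies the $K$-dual reconstruction identity \eqref{dual1}, namely $Kf = \sum_{i\in I}\langle f, g_i\rangle\,\pi_{R(K)}f_i$ for all $f\in\mathcal{H}$. Along the way I would extract the two stated bounds. The key structural fact I would lean on is the one quoted from \cite{Xiao}: since $K$ has closed range, $S_{F}$ restricted to $R(K)$ is an invertible operator from $R(K)$ onto $S_{F}(R(K))$, together with the norm estimate \eqref{bound S}.

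The natural route to the reconstruction identity is to rewrite $g_i$ so that the sum collapses through the frame operator. I would set $T := (S_{F}\mid_{R(K)})^{-1}\pi_{S_{F}(R(K))}$, so that $g_i = K^{*}Tf_i$ and $\langle f, g_i\rangle = \langle Tf, \dots\rangle$ is not quite symmetric, so care is needed: in fact $\langle f, g_i\rangle = \langle f, K^{*}Tf_i\rangle = \langle Kf, Tf_i\rangle = \langle T^{*}Kf, f_i\rangle$, using that $T$ and its adjoint intertwine correctly. Then
\begin{eqnarray*}
\sum_{i\in I}\langle f, g_i\rangle\,\pi_{R(K)}f_i
= \pi_{R(K)}\sum_{i\in I}\langle T^{*}Kf, f_i\rangle f_i
= \pi_{R(K)}\,S_{F}\,T^{*}Kf.
\end{eqnarray*}
The crux is then to show $\pi_{R(K)}S_{F}T^{*}Kf = Kf$, i.e.\ that $T^{*} = \left((S_{F}\mid_{R(K)})^{-1}\pi_{S_{F}(R(K))}\right)^{*}$ acts as a right inverse of $S_F$ on $R(K)$ after projecting. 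Since $Kf\in R(K)$ and $S_{F}\mid_{R(K)}$ is a bijection onto $S_{F}(R(K))$, the self-adjointness of $S_{F}$ and $\pi_{R(K)}$ should let me identify $\pi_{R(K)}S_{F}T^{*}$ with the identity on $R(K)$; this adjoint bookkeeping is the main obstacle, because $T$ maps into $R(K)$ but $\pi_{S_{F}(R(K))}$ and $\pi_{R(K)}$ must be kept distinct and their interplay with the self-adjoint $S_F$ tracked carefully.

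For the bounds I would estimate $\sum_{i\in I}|\langle f, g_i\rangle|^2$ from above and below as a function of $\|K^{*}f\|$, using \eqref{bound S}. Writing $\langle f, g_i\rangle = \langle (S_{F}\mid_{R(K)})^{-1}\pi_{S_{F}(R(K))}Kf, f_i\rangle$ wait — more directly, $g_i = K^{*}w_i$ where $w_i := (S_{F}\mid_{R(K)})^{-1}\pi_{S_{F}(R(K))}f_i$, and one recognizes $\{w_i\}_{i\in I}$ as the canonical dual-type frame for the frame $\{\pi_{S_F(R(K))}f_i\}$ on the subspace $S_F(R(K))$. The upper Bessel bound then follows by composing the Bessel bound $B$ of $F$ with the operator norm $\|(S_{F}\mid_{R(K)})^{-1}\|\le A^{-1}\|K^{\dag}\|^2$ from \eqref{bound S} and the factor $\|K\|^2$ coming from the outer $K^{*}$, yielding $BA^{-1}\|K\|^{2}\|K^{\dag}\|^{2}$; the lower bound $B^{-1}$ comes from the reciprocal estimate $B^{-1}\|f\|\le\|S_F^{-1}f\|$ in \eqref{bound S} applied to $Kf$.

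Once the reconstruction identity and the upper Bessel bound are in hand, the lower $K^{*}$-type frame inequality needed to certify $\{g_i\}$ as a genuine $K$-dual actually follows automatically from the preceding Lemma, since any $K$-dual of a Bessel sequence is shown there to satisfy the lower $K$-frame condition; so I would invoke that Lemma rather than re-derive the lower bound from scratch. The real work, and the step I expect to be delicate, is the operator identity $\pi_{R(K)}S_{F}T^{*}Kf = Kf$, where the two different projections and the restricted inverse must be reconciled.
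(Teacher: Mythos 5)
Your outline retraces the paper's own proof almost step for step: both rest on the invertibility of $S_{F}\mid_{R(K)}:R(K)\rightarrow S_{F}(R(K))$ coming from (\ref{bound S}), both reduce the reconstruction identity to adjoint bookkeeping for this restricted operator, both invoke the preceding lemma for the lower bound $B^{-1}$, and both get the upper bound by composing $B$ with $\|(S_{F}\mid_{R(K)})^{-1}\|$ and $\|K\|^{2}$. The problem is that the one step you repeatedly defer --- the identity $\pi_{R(K)}S_{F}T^{*}Kf=Kf$, where $T=(S_{F}\mid_{R(K)})^{-1}\pi_{S_{F}(R(K))}$ --- is the entire mathematical content of the proposition, and ``the self-adjointness of $S_{F}$ and $\pi_{R(K)}$ should let me identify $\pi_{R(K)}S_{F}T^{*}$ with the identity on $R(K)$'' is a statement of intent, not an argument. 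As written, your proposal is a correct plan with the crux left open.

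The gap does close, and by exactly the computation the paper isolates as (\ref{S^*}): for $f\in R(K)$ and $g\in S_{F}(R(K))$ one has
\begin{equation*}
\left\langle S_{F}\mid_{R(K)}f,g\right\rangle=\left\langle f,\sum_{i\in I}\langle g,f_{i}\rangle\pi_{R(K)}f_{i}\right\rangle,
\end{equation*}
so that $(S_{F}\mid_{R(K)})^{*}g=\sum_{i\in I}\langle g,f_{i}\rangle\pi_{R(K)}f_{i}=\pi_{R(K)}S_{F}g$. Since $T^{*}Kf=\left((S_{F}\mid_{R(K)})^{-1}\right)^{*}Kf$ lies in $S_{F}(R(K))$, this gives
$\pi_{R(K)}S_{F}T^{*}Kf=(S_{F}\mid_{R(K)})^{*}\left((S_{F}\mid_{R(K)})^{-1}\right)^{*}Kf=\left((S_{F}\mid_{R(K)})^{-1}S_{F}\mid_{R(K)}\right)^{*}Kf=Kf$, which is the paper's chain of equalities read backwards. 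Alternatively, you can avoid adjoints of restrictions altogether: for $k\in R(K)$ one has $TS_{F}k=k$, hence $\langle S_{F}T^{*}Kf,k\rangle=\langle T^{*}Kf,S_{F}k\rangle=\langle Kf,TS_{F}k\rangle=\langle Kf,k\rangle$; since $\pi_{R(K)}S_{F}T^{*}Kf-Kf$ lies in the closed subspace $R(K)$ and is orthogonal to all of it, it vanishes. Either way the missing step is three lines, so the defect is incompleteness rather than error. One further caveat, which your sketch shares with the paper: composing the bounds as you describe yields $\sum_{i\in I}|\langle f,g_{i}\rangle|^{2}\leq B\|(S_{F}\mid_{R(K)})^{-1}\|^{2}\|K\|^{2}\|f\|^{2}\leq BA^{-2}\|K^{\dag}\|^{4}\|K\|^{2}\|f\|^{2}$, because (\ref{bound S}) bounds $\|(S_{F}\mid_{R(K)})^{-1}\|$, not its square, by $A^{-1}\|K^{\dag}\|^{2}$; the constant $BA^{-1}\|K^{\dag}\|^{2}\|K\|^{2}$ asserted in the statement does not follow from this chain of estimates.
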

\begin{proof}
First note that $S_{F}|_{R(K)}:R(K)\rightarrow S_{F}(R(K))$ is invertible by (\ref{bound S}). It follows that  $\left\{K^{*}(S_{F}\mid_{R(K)})^{-1}\pi_{S_{F}(R(K))}f_{i}\right\}_{i\in I}$ is a Bessel sequence.
Moreover,
 \begin{eqnarray*}
 \left\langle  S_{F}|_{R(K)}f,g\right\rangle
&=&\left\langle \sum_{i\in I}\left\langle \pi_{R(K)}f,f_{i}\right\rangle f_{i},g\right\rangle\\
&=&\left\langle f,\sum_{i\in I}\left\langle g,f_{i}\right\rangle \pi_{R(K)}f_{i}\right\rangle,
 \end{eqnarray*}
for all $f\in R(K)$ and $g\in S_{F}(R(K))$. So,
\begin{eqnarray}\label{S^*}
(S_{F}\mid_{R(K)})^{*}g=\sum_{i\in I}\langle g,f_{i}\rangle \pi_{R(K)}f_{i}.
\end{eqnarray}

 Thus,
\begin{eqnarray*}
Kf&=&(S_{F}\mid_{R(K)})^{-1}S_{F}\mid_{R(K)}Kf\\
&=&(S_{F}|_{R(K)})^{*}\left((S_{F}\mid_{R(K)})^{-1}\right)^{*}Kf\\
&=&\sum_{i\in I}\left\langle \left((S_{F}\mid_{R(K)})^{-1}\right)^{*}Kf,f_{i}\right\rangle \pi_{R(K)}f_{i}\\
&=&\sum_{i\in I}\left\langle f,K^{*}(S_{F}\mid_{R(K)})^{-1}\pi_{S_{F}(R(K))}f_{i}\right\rangle \pi_{R(K)}f_{i},
\end{eqnarray*}
for all $f\in \mathcal{H}$. So, $\left\{K^{*}(S_{F}\mid_{R(K)})^{-1}\pi_{S_{F}(R(K))}f_{i}\right\}_{i\in I}$ is a $K$-dual of $F$ and with the lower bound of $B^{-1}$, by the last lemma. On the other hand, by using (\ref{bound S}) we have
\begin{eqnarray*}
\sum_{i\in I}\left|\left\langle f, K^{*}(S_{F}\mid_{R(K)})^{-1}\pi_{S_{F}(R(K))}f_{i}\right\rangle\right|^{2}&
\leq& B\left\|\left((S_{F}\mid_{R(K)})^{-1}\right)^{*}Kf\right\|^{2}\\
&\leq& B\left\|(S_{F}\mid_{R(K)})^{-1}\right\|^{2}\|Kf\|^{2}\\
&\leq& BA^{-1}\left\|K^{\dag}\right\|^{2}\|K\|^{2}\|f\|^{2},
\end{eqnarray*}
for all $f\in \mathcal{H}$. This completes the proof.
\end{proof}
The $K$-dual $\left\{K^{*}(S_{F}\mid_{R(K)})^{-1}\pi_{S_{F}(R(K))}f_{i}\right\}_{i\in I}$ of $F=\{f_{i}\}_{i\in I}$, introduced in the above proposition, is called the \textit{canonical $K$-dual} of $F$ and is represented by $\widetilde{F}$ for brevity.

The relation between discrete frame bounds and its canonical dual bounds is not hold for $K$-frames, see the following example.
\begin{ex}\label{ex}
Let $F=\left\{(\frac{-1}{\sqrt{2}},\frac{1}{\sqrt{2}}),
(\frac{-1}{\sqrt{2}},\frac{1}{\sqrt{2}}),(\frac{1}{\sqrt{2}},\frac{1}{\sqrt{2}})
\right\}$ in $\mathcal{H}=\mathbb{C}^{2}$ and $K$ be the orthogonal projection onto the subspace spanned by $e_{1}$, where $\{e_{1},e_{2}\}$ is the orthonormal basis of $\mathbb{C}^{2}$. For all $f=(a,b)\in \mathbb{C}^{2}$ we obtain
\begin{equation*}
1\|K^{*}f\|^{2}\leq\sum_{i\in I}\left|\left\langle f,f_{i}\right\rangle\right|^{2}=\frac{3}{2}(a^{2}+b^{2})-ab\leq 2\|f\|^{2}.
\end{equation*}
One can see that $S_{F}(R(K))=span(\frac{3}{2},\frac{-1}{2})$. Hence,
\begin{equation*}
\widetilde{F}=\left\{(\frac{-4}{5\sqrt{2}},0),
(\frac{2}{5\sqrt{2}},0),(\frac{-4}{5\sqrt{2}},
0)
\right\}
\end{equation*}
Therefore,
\begin{equation*}
\sum_{i\in I}\left|\left\langle f,\widetilde{f_{i}}\right\rangle\right|^{2}=\frac{36}{50}\|Kf\|^{2}.
\end{equation*}
\end{ex}
In the following theorem, we characterize all $K$-duals of a $K$-frame.
\begin{thm}
Assume that $K\in B(\mathcal{H})$ is a closed range operator and $F=\{f_{i}\}_{i\in I}$  a $K$-frame for $\mathcal{H}$. Then $\{g_{i}\}_{i\in I}$ is a $K$-dual of $F$ if and only if
\begin{equation*}
g_{i}=\widetilde{f_{i}}+\phi^{*}\delta_{i},
\end{equation*}
where $\{\delta_{i}\}_{i\in I}$ is the standard orthonormal basis of $\ell^{2}$ and $\phi\in B(\mathcal{H},\ell^{2})$ such that
$\pi_{R(K)}T_{F}\phi=0$.
\end{thm}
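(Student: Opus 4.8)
The plan is to recast the defining relation (\ref{dual1}) as a single operator identity and then play it off against the fact, established in the preceding proposition, that the canonical $K$-dual $\widetilde{F}$ already satisfies it. Writing $T_{G}$ for the synthesis operator of $G=\{g_{i}\}_{i\in I}$, its adjoint $T_{G}^{*}$ sends $f$ to $\{\langle f,g_{i}\rangle\}_{i\in I}$, while the synthesis operator of $\{\pi_{R(K)}f_{i}\}_{i\in I}$ is $\pi_{R(K)}T_{F}$. Hence $Kf=\sum_{i\in I}\langle f,g_{i}\rangle\pi_{R(K)}f_{i}$ for all $f$ is equivalent to the operator equation $K=\pi_{R(K)}T_{F}T_{G}^{*}$. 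In particular, since $\widetilde{F}$ is a $K$-dual of $F$, we also have $K=\pi_{R(K)}T_{F}T_{\widetilde{F}}^{*}$.

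For the forward direction I would assume $G$ is a $K$-dual and set $\phi:=T_{G}^{*}-T_{\widetilde{F}}^{*}\in B(\mathcal{H},\ell^{2})$, which is bounded because both $G$ and $\widetilde{F}$ are Bessel sequences. Taking adjoints gives $\phi^{*}=T_{G}-T_{\widetilde{F}}$, so that $\phi^{*}\delta_{i}=T_{G}\delta_{i}-T_{\widetilde{F}}\delta_{i}=g_{i}-\widetilde{f_{i}}$, i.e. $g_{i}=\widetilde{f_{i}}+\phi^{*}\delta_{i}$ for every $i\in I$. Subtracting the two operator identities $K=\pi_{R(K)}T_{F}T_{G}^{*}$ and $K=\pi_{R(K)}T_{F}T_{\widetilde{F}}^{*}$ yields precisely $\pi_{R(K)}T_{F}\phi=0$, which is the asserted constraint on $\phi$.

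For the converse I would start from $g_{i}=\widetilde{f_{i}}+\phi^{*}\delta_{i}$ with $\phi\in B(\mathcal{H},\ell^{2})$ and $\pi_{R(K)}T_{F}\phi=0$. Since $\phi^{*}$ is bounded and $\{\delta_{i}\}_{i\in I}$ is an orthonormal basis of $\ell^{2}$, the sequence $\{\phi^{*}\delta_{i}\}_{i\in I}$ is Bessel, hence $G$ is Bessel as a sum of two Bessel sequences; equivalently $T_{G}^{*}=T_{\widetilde{F}}^{*}+\phi$ is bounded. Then for every $f\in\mathcal{H}$ one computes $\sum_{i\in I}\langle f,g_{i}\rangle\pi_{R(K)}f_{i}=\pi_{R(K)}T_{F}T_{G}^{*}f=\pi_{R(K)}T_{F}T_{\widetilde{F}}^{*}f+\pi_{R(K)}T_{F}\phi f=Kf+0=Kf$, so $G$ is a $K$-dual of $F$.

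I expect no genuine obstacle here: the whole argument rests on the bookkeeping that matches the coefficient decomposition $g_{i}=\widetilde{f_{i}}+\phi^{*}\delta_{i}$ with the operator relation $\phi=T_{G}^{*}-T_{\widetilde{F}}^{*}$, together with the observation that $\phi^{*}\delta_{i}$ recovers the $i$-th generator of $\phi^{*}$. The one point deserving care is verifying that the Bessel property is preserved in both directions, which follows from boundedness of $\phi$ and $\phi^{*}$; the forward direction simultaneously confirms that $\phi$ indeed ranges over all of $\{\phi\in B(\mathcal{H},\ell^{2}):\pi_{R(K)}T_{F}\phi=0\}$, so the parametrization is exhaustive.
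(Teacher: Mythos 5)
Your proof is correct and follows essentially the same route as the paper: the paper's choice $\phi = T_{G}^{*}-T_{F}^{*}\bigl((S_{F}|_{R(K)})^{-1}\bigr)^{*}K$ is exactly your $\phi = T_{G}^{*}-T_{\widetilde{F}}^{*}$, since $T_{\widetilde{F}}^{*}=T_{F}^{*}\bigl((S_{F}|_{R(K)})^{-1}\bigr)^{*}K$, and both directions proceed by the same operator bookkeeping. If anything, your phrasing of the Bessel property of $G$ via boundedness of $T_{\widetilde{F}}^{*}+\phi$ is cleaner than the paper's termwise estimate, which as written drops a factor of $2$ in the inequality $|a+b|^{2}\le 2\left(|a|^{2}+|b|^{2}\right)$.
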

\begin{proof}
First, assume that
 \begin{equation*}
g_{i}=\widetilde{f_{i}}+\phi^{*}\delta_{i},
\end{equation*}
 for some $\phi\in B(\mathcal{H},\ell^{2})$, with $\pi_{R(K)}T_{F}\phi=0$.
 Then $\{g_{i}\}_{i\in I}$ is a Bessel sequence
because
\begin{eqnarray*}
\sum_{i\in I}\left|\left\langle f,g_{i}\right\rangle\right|^{2}&\leq&
\sum_{i\in I}\left|\left\langle f,\widetilde{f_{i}}+\phi^{*}\delta_{i}\right\rangle\right|^{2}\\
&\leq&\sum_{i\in I}\left|\left\langle f,\widetilde{f_{i}}\right\rangle\right|^{2}+\sum_{i\in I}\left|\left\langle f,\phi^{*}\delta_{i}\right\rangle\right|^{2}\\
&\leq& \left( BA^{-1}\left\|K^{\dag}\right\|^{2}\|K\|^{2}+\|\phi\|^{2}\right)\|f\|^{2},
\end{eqnarray*}
for all $f\in \mathcal{H}$, where $A$ is a lower bound for $\{f_{i}\}_{i\in I}$. Moreover,
\begin{eqnarray*}
\sum_{i\in I}\langle f,g_{i}\rangle \pi_{R(K)}f_{i}&=&\sum_{i\in I}\langle f,\widetilde{f_{i}}+\phi^{*}\delta_{i}\rangle \pi_{R(K)}f_{i}\\
&=&Kf+\pi_{R(K)}T_{F}\phi f=Kf.
\end{eqnarray*}
Therefore, $\{g_{i}\}_{i\in I}$ is a $K$-dual of $F$. For the reverse, let $\{g_{i}\}_{i\in I}$ be a $K$-dual of $F$. Define
\begin{equation*}
\phi=T_{g}^{*}-T_{F}^{*}\left((S_{F}\mid_{R(K)})^{-1}\right)^{*}K.
\end{equation*}
Then $\phi\in B(\mathcal{H},\ell^{2})$ and applying (\ref{S^*}) implies that
\begin{eqnarray*}
\pi_{R(K)}T_{F}\phi f&=&\pi_{R(K)}T_{F} T_{g}^{*}-\pi_{R(K)}T_{F} T_{F}^{*}\left((S_{F}\mid_{R(K)})^{-1}\right)^{*}Kf\\
&=& \sum_{i\in I}\langle f,g_{i}\rangle \pi_{R(K)}f_{i}-(S_{F}\mid_{R(K)})^{*}\left((S_{F}\mid_{R(K)})^{-1}\right)^{*}Kf
=0.
\end{eqnarray*}
Furthermore, 
\begin{eqnarray*}
\widetilde{f_{i}}+\phi^{*}\delta_{i}&=&K^{*}(S_{F}\mid_{R(K)})^{-1}
\pi_{S_{F}(R(K))}f_{i}+\phi^{*}\delta_{i}\\
&=&K^{*}(S_{F}\mid_{R(K)})^{-1}
\pi_{S_{F}(R(K))}f_{i}+T_{g}\delta_{i}-K^{*}(S_{F}\mid_{R(K)})^{-1}
\pi_{S_{F}(R(K))}T_{F}\delta_{i}\\
&=&g_{i},
\end{eqnarray*}
for all $i\in I$.
\end{proof}
In discrete frames, every frames and its canonical dual are dual of each other. But,  it is not true for $K$-frames in general.
In Example \ref{ex}, we obtain $S_{\widetilde{F}}\mid_{R(K^{*})}=span(\frac{36}{50},0)$. It is easy computations to see that, 
\begin{eqnarray*}
K(S_{\widetilde{F}}\mid_{R(K^{*})})^{-1}
\pi_{S_{\widetilde{F}}(R(K^{*}))}(\frac{1}{\sqrt{2}},\frac{1}{\sqrt{2}})=
(\frac{50^{2}}{36^{2}\sqrt{2}},0)\neq (\frac{1}{\sqrt{2}},\frac{1}{\sqrt{2}}).
\end{eqnarray*}

\begin{prop}
If $K\in B(\mathcal{H})$ is a closed range operator and $F=\{f_{i}\}_{i\in I}$ a $K$-frame, then $\left\{K^{*}\pi_{R(K)}f_{i}\right\}_{i\in I}$ is a $K$-dual for $\left\{(S_{F}|_{R(K)})^{-1}\pi_{S_{F}(R(K))}f_{i}\right\}_{i\in I}$. 
\end{prop}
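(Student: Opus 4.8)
The plan is to check the two defining requirements of a $K$-dual directly: that both sequences are Bessel, and that the reconstruction identity (\ref{dual1}) holds with $\{K^{*}\pi_{R(K)}f_{i}\}_{i\in I}$ in the role of the dual sequence and $\{(S_{F}|_{R(K)})^{-1}\pi_{S_{F}(R(K))}f_{i}\}_{i\in I}$ in the role of the analyzed sequence. For the Bessel property I would use that every $K$-frame, hence $F$, is Bessel, together with the elementary fact that applying a fixed bounded operator to a Bessel sequence again produces a Bessel sequence. Here the two operators are $K^{*}\pi_{R(K)}$ and $(S_{F}|_{R(K)})^{-1}\pi_{S_{F}(R(K))}$, the latter being bounded because $S_{F}|_{R(K)}$ is invertible by (\ref{bound S}); both Bessel claims then follow at once.

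The heart of the matter is the reconstruction formula. First I would simplify the coefficients: since $Kf\in R(K)$ and $\pi_{R(K)}$ is a self-adjoint idempotent,
\[
\langle f,K^{*}\pi_{R(K)}f_{i}\rangle=\langle Kf,\pi_{R(K)}f_{i}\rangle=\langle \pi_{R(K)}Kf,f_{i}\rangle=\langle Kf,f_{i}\rangle .
\]
Writing $h_{i}:=(S_{F}|_{R(K)})^{-1}\pi_{S_{F}(R(K))}f_{i}$, I would note that $h_{i}\in R(K)$, since $(S_{F}|_{R(K)})^{-1}$ maps into $R(K)$, so that $\pi_{R(K)}h_{i}=h_{i}$. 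Thus the right-hand side of (\ref{dual1}) equals $\sum_{i\in I}\langle Kf,f_{i}\rangle h_{i}$. Pulling the bounded operator $(S_{F}|_{R(K)})^{-1}\pi_{S_{F}(R(K))}$ through the convergent series and recognizing the remaining sum as the frame operator acting on $Kf$ gives
\[
\sum_{i\in I}\langle Kf,f_{i}\rangle h_{i}=(S_{F}|_{R(K)})^{-1}\pi_{S_{F}(R(K))}S_{F}(Kf).
\]
Because $Kf\in R(K)$ we have $S_{F}(Kf)=(S_{F}|_{R(K)})(Kf)\in S_{F}(R(K))$, so the inner projection fixes it and the restricted frame operator cancels its inverse, leaving $Kf$. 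This is precisely (\ref{dual1}).

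The calculation is essentially routine, and I expect no deep obstacle. The points demanding care are purely bookkeeping: correctly tracking the domains and codomains of the restriction $S_{F}|_{R(K)}\colon R(K)\to S_{F}(R(K))$ alongside the two orthogonal projections, justifying the interchange of the bounded operator with the infinite sum (legitimate by continuity together with the convergence of $\sum_{i}\langle Kf,f_{i}\rangle f_{i}$, guaranteed by the Bessel property of $F$), and observing the two places where a projection acts as the identity, namely $\pi_{R(K)}h_{i}=h_{i}$ and $\pi_{S_{F}(R(K))}S_{F}(Kf)=S_{F}(Kf)$.
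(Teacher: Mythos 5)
Your proof is correct, but it takes a genuinely different route from the paper's. The paper derives the identity from the canonical $K$-dual machinery: it takes the adjoint of the canonical reconstruction formula $Kf=\sum_{i\in I}\langle f,\widetilde{f_{i}}\rangle\pi_{R(K)}f_{i}$ to obtain $K^{*}f=\sum_{i\in I}\langle f,\pi_{R(K)}f_{i}\rangle\widetilde{f_{i}}$, then writes $Kf=\pi_{R(K)}(K^{\dag})^{*}K^{*}Kf$ using the Moore--Penrose pseudo-inverse of the closed-range operator $K$ (so that $(K^{\dag})^{*}K^{*}=\pi_{R(K)}$), and finally absorbs $(K^{\dag})^{*}$ into $\widetilde{f_{i}}=K^{*}(S_{F}|_{R(K)})^{-1}\pi_{S_{F}(R(K))}f_{i}$ to turn each term into $\pi_{R(K)}(S_{F}|_{R(K)})^{-1}\pi_{S_{F}(R(K))}f_{i}$. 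Your argument bypasses both the canonical dual's reconstruction property and the pseudo-inverse: you reduce the coefficients to $\langle Kf,f_{i}\rangle$, pull the bounded operator $(S_{F}|_{R(K)})^{-1}\pi_{S_{F}(R(K))}$ through the series by continuity, recognize $S_{F}(Kf)$, and cancel $S_{F}|_{R(K)}$ against its inverse --- using only the invertibility guaranteed by (\ref{bound S}) and the Bessel property of $F$. This is more self-contained and makes transparent why the two operators $K^{*}\pi_{R(K)}$ and $(S_{F}|_{R(K)})^{-1}\pi_{S_{F}(R(K))}$ fit together; what the paper's route buys is that the proposition appears as an almost formal consequence of results already established (it is essentially the adjoint of the canonical reconstruction, corrected by projections), reusing the standard closed-range identity $KK^{\dag}=\pi_{R(K)}$. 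You also check the Bessel conditions explicitly, which the paper leaves implicit; that is a small but genuine gain in completeness.
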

\begin{proof}
Applying the $K$-dual $\left\{\widetilde{f_{i}}\right\}_{i\in I}$ for $\{f_{i}\}_{i\in I}$ we have
\begin{equation*}
K^{*}f=\sum_{i\in I}\langle f, \pi_{R(K)}f_{i}\rangle\widetilde{f_{i}},
\end{equation*}
for all $f\in \mathcal{H}$. On the other hand, $KK^{\dag}$ is a projection on $R(K)$. So,
\begin{eqnarray*}
Kf&=&\pi_{R(K)}(K^{\dag})^{*}K^{*}Kf\\
&=&\sum_{i\in I}\left\langle Kf,\pi_{R(K)}f_{i}\right\rangle \pi_{R(K)}(K^{\dag})^{*}\widetilde{f_{i}}\\
&=&\sum_{i\in I}\left\langle Kf,\pi_{R(K)}f_{i}\right\rangle \pi_{R(K)}(S_{F}|_{R(K)})^{-1}\pi_{S_{F}(R(K))}f_{i}.
\end{eqnarray*}
for all $f\in \mathcal{H}$.
\end{proof}
\begin{thm}\label{minimal-norm}
Let $F=\{f_{i}\}_{i\in I}$ be a $K$-frame and $\sum_{i\in I}\left\langle f,\widetilde{f_{i}}\right\rangle f_{i}$ has a representation $\sum_{i\in I}c_{i}f_{i}$ for some coefficients $\{c_{i}\}_{i\in I}$, where $f\in \mathcal{H}$. Then
\begin{eqnarray*}
\sum_{i\in I}\left|c_{i}\right|^{2}=\sum_{i\in I}|\langle
f,\widetilde{f_{i}}\rangle|^{2}+\sum_{i\in I}|c_{i}-\langle
f,\widetilde{f_{i}}\rangle|^{2}.
\end{eqnarray*}
\end{thm}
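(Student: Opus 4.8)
The plan is to recast everything in the coefficient space $\ell^{2}$ and to recognise the asserted equality as the Pythagorean identity for an orthogonal decomposition. Write $a=\{a_{i}\}_{i\in I}$ with $a_{i}=\langle f,\widetilde{f_{i}}\rangle$, and let $c=\{c_{i}\}_{i\in I}$ be the given coefficient sequence. Since both sequences reproduce the same element, namely $\sum_{i\in I}a_{i}f_{i}=\sum_{i\in I}c_{i}f_{i}$, we have $T_{F}a=T_{F}c$, so that $c-a\in\ker T_{F}$. The identity to be proved is exactly $\|c\|_{\ell^{2}}^{2}=\|a\|_{\ell^{2}}^{2}+\|c-a\|_{\ell^{2}}^{2}$, and this holds as soon as $a\perp(c-a)$ in $\ell^{2}$. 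Because $\ker T_{F}=R(T_{F}^{*})^{\perp}$ for any bounded $T_{F}$, it suffices to show that the canonical coefficient sequence $a$ lies in $\overline{R(T_{F}^{*})}$; in fact I will show the stronger $a\in R(T_{F}^{*})$.

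The crux is therefore to exhibit $a$ as $T_{F}^{*}h$ for a suitable $h\in\mathcal{H}$, and here I would unwind the definition of the canonical $K$-dual. Using $\widetilde{f_{i}}=K^{*}(S_{F}|_{R(K)})^{-1}\pi_{S_{F}(R(K))}f_{i}$, I move $K^{*}$ and $(S_{F}|_{R(K)})^{-1}$ across the inner product and drop the projection $\pi_{S_{F}(R(K))}$ against a vector already lying in $S_{F}(R(K))$, obtaining
\[
a_{i}=\langle f,\widetilde{f_{i}}\rangle=\bigl\langle \bigl((S_{F}|_{R(K)})^{-1}\bigr)^{*}Kf,\,f_{i}\bigr\rangle .
\]
Setting $h=\bigl((S_{F}|_{R(K)})^{-1}\bigr)^{*}Kf\in\mathcal{H}$ gives $a_{i}=\langle h,f_{i}\rangle$, i.e. $a=T_{F}^{*}h\in R(T_{F}^{*})$. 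This is precisely the manipulation already performed in the proof of the canonical $K$-dual proposition, so it can be quoted rather than redone.

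With $a\in R(T_{F}^{*})\subseteq\ker(T_{F})^{\perp}$ and $c-a\in\ker T_{F}$, the two summands in $c=a+(c-a)$ are orthogonal, and Pythagoras yields $\|c\|_{\ell^{2}}^{2}=\|a\|_{\ell^{2}}^{2}+\|c-a\|_{\ell^{2}}^{2}$, which is the claimed identity written out coordinatewise. I expect the only genuine obstacle to be the membership $a\in R(T_{F}^{*})$: once the canonical coefficients are identified as the analysis coefficients of the single vector $h=\bigl((S_{F}|_{R(K)})^{-1}\bigr)^{*}Kf$, the orthogonality and the Pythagorean step are routine. One should also take momentary care that $h$ is a bona fide element of $\mathcal{H}$ and that dropping $\pi_{S_{F}(R(K))}$ is legitimate, which holds because $\bigl((S_{F}|_{R(K)})^{-1}\bigr)^{*}Kf$ lands in $S_{F}(R(K))$.
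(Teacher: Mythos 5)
Your proof is correct, and it hinges on the same key identity as the paper's proof---namely $\langle f,\widetilde{f_{i}}\rangle=\bigl\langle \bigl((S_{F}|_{R(K)})^{-1}\bigr)^{*}Kf,\,f_{i}\bigr\rangle$---but the argument built around that identity is organized quite differently. The paper first verifies that $K^{*}(S_{F}|_{R(K)})^{-1}\pi_{S_{F}(R(K))}S_{F}\bigl((S_{F}|_{R(K)})^{-1}\bigr)^{*}K$ is the frame operator $S_{\widetilde{F}}$ of the canonical $K$-dual, and then expands $\sum_{i\in I}|c_{i}-\langle f,\widetilde{f_{i}}\rangle|^{2}$ term by term, using the hypothesis $\sum_{i}c_{i}f_{i}=\sum_{i}\langle f,\widetilde{f_{i}}\rangle f_{i}$ to recognize each of the two cross terms as $\langle S_{\widetilde{F}}f,f\rangle=\sum_{i}|\langle f,\widetilde{f_{i}}\rangle|^{2}$. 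You compress exactly that cross-term bookkeeping into the abstract fact $\ker T_{F}=R(T_{F}^{*})^{\perp}$: writing $a=T_{F}^{*}h$ with $h=\bigl((S_{F}|_{R(K)})^{-1}\bigr)^{*}Kf$ and noting $c-a\in\ker T_{F}$, the identity becomes Pythagoras in $\ell^{2}$, since the paper's cross terms are precisely your statement $\langle c-a,a\rangle_{\ell^{2}}=0$. Your route buys brevity and structure: no computation of $S_{\widetilde{F}}$ is needed, and the theorem is exposed as the $K$-frame analogue of the classical minimal-norm property of frame coefficients. The paper's expansion, though longer, stays entirely at the level of scalar series and never invokes the kernel--range duality. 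One shared implicit assumption you could flag explicitly: to apply $T_{F}$ to $c$ one needs $\{c_{i}\}_{i\in I}\in\ell^{2}$; this is harmless, because if $\sum_{i}|c_{i}|^{2}=\infty$ then, since $\{\langle f,\widetilde{f_{i}}\rangle\}_{i\in I}\in\ell^{2}$, both sides of the claimed identity are infinite and it holds trivially.
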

\begin{proof}
First note that $K^{*}(S_{F}|_{R(K)})^{-1}\pi_{S_{F}(R(K))}S_{F}\left((S_{F}|_{R(K)})^{-1}
\right)^{*}K$ is the frame operator of the canonical $K$-dual of $F$. Indeed,
\begin{eqnarray*}
S_{\widetilde{F}}f&=& \sum_{i\in I}\left\langle f,\widetilde{f_{i}}\right\rangle \widetilde{f_{i}}\\
&=&K^{*}(S_{F}|_{R(K)})^{-1}\pi_{S_{F}(R(K))}\sum_{i\in I}\left\langle \left((S_{F}|_{R(K)})^{-1}\right)^{*}Kf,f_{i}\right\rangle f_{i}\\
&=&
K^{*}(S_{F}|_{R(K)})^{-1}\pi_{S_{F}(R(K))}S_{F}\left((S_{F}|_{R(K)})^{-1}
\right)^{*}K,
\end{eqnarray*}
 for every $f\in \mathcal{H}$.
Moreover,
\begin{eqnarray*}
\sum_{i\in I}\left|\left\langle f,\widetilde{f_{i}}\right\rangle\right|^2&=&\left\langle S_{\widetilde{F}}f,f\right\rangle\\
&=&\left\langle K^{*}(S_{F}|_{R(K)})^{-1}\pi_{S_{F}(R(K))}S_{F}\left((S_{F}|_{R(K)})^{-1}
\right)^{*}Kf,f\right\rangle\\
\end{eqnarray*}
This follows that
\begin{eqnarray*}
\sum_{i\in I}\left|c_{i}-\left\langle
f,\widetilde{f_{i}}\right\rangle\right|^{2}&=&
\sum_{i\in I}\left|c_{i}-\left\langle \left((S_{F}|_{R(K)})^{-1}
\right)^{*}Kf,f_{i}\right\rangle\right|^{2}\\
&=&\sum_{i\in I}\left|c_{i}\right|^{2}-c_{i}\left\langle f_{i},\left((S_{F}|_{R(K)})^{-1}
\right)^{*}Kf \right\rangle-\overline{c_{i}}\left\langle\left((S_{F}|_{R(K)})^{-1}
\right)^{*}Kf, f_{i}\right\rangle\\
&&+\left\langle K^{*}(S_{F}|_{R(K)})^{-1}\pi_{S_{F}(R(K))}S_{F}\left((S_{F}|_{R(K)})^{-1}
\right)^{*}Kf,f\right\rangle\\
&=&\sum_{i\in I}\left|c_{i}\right|^{2}-2\left\langle K^{*}(S_{F}|_{R(K)})^{-1}\pi_{S_{F}(R(K))}S_{F}\left((S_{F}|_{R(K)})^{-1}
\right)^{*}Kf,f\right\rangle\\
&&+
\sum_{i\in I}\left|\left\langle f,\widetilde{f_{i}}\right\rangle\right|^2\\
&=&\sum_{i\in I}\left|c_{i}\right|^{2}-\sum_{i\in I}\left|\left\langle
f,\widetilde{f_{i}}\right\rangle\right|^{2}.
\end{eqnarray*}
\end{proof}
As a consequence of Theorem 2.5.3 of \cite{Chr08} we obtain the following result.
\begin{cor}
Let $\{f_{i}\}_{i\in I}$ be a $K$-frame with the synthesis operator $T_{F}:\ell^{2}\rightarrow\mathcal{H}$. Then for every $f\in \mathcal{H}$
we have
\begin{eqnarray*}
 T_{F}^{\dag}(S_{F}\left((S_{F}|_{R(K)})^{-1}\right)^{*}Kf)=\{\langle f,\widetilde{f_{i}}\rangle\}_{i\in I},
\end{eqnarray*}
where $T_{F}^{\dag}$ is the pseudo-inverse of $T_{F}$.
\end{cor}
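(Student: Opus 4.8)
The plan is to read the element $S_F\left((S_F|_{R(K)})^{-1}\right)^{*}Kf$ as a point of $R(T_F)$ whose coefficient sequence of minimal $\ell^{2}$-norm relative to $F$ is exactly $\{\langle f,\widetilde{f_i}\rangle\}_{i\in I}$, and then to quote the pseudo-inverse characterization of Theorem 2.5.3 of \cite{Chr08}, namely that $T_F^{\dag}$ sends each element of $R(T_F)$ to precisely that minimal-norm representing sequence. The whole argument is thus a bridge between Theorem \ref{minimal-norm} and the general minimal-norm property of the Moore--Penrose inverse of the synthesis operator.

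First I would abbreviate $g:=S_F\left((S_F|_{R(K)})^{-1}\right)^{*}Kf$ and recall, from the computations already carried out in the proof of Theorem \ref{minimal-norm}, the identity $\langle f,\widetilde{f_i}\rangle=\left\langle \left((S_F|_{R(K)})^{-1}\right)^{*}Kf,f_i\right\rangle$ valid for every $i\in I$. Multiplying by $f_i$ and summing gives
\[
g=S_F\left((S_F|_{R(K)})^{-1}\right)^{*}Kf=\sum_{i\in I}\langle f,\widetilde{f_i}\rangle f_i=T_F\bigl(\{\langle f,\widetilde{f_i}\rangle\}_{i\in I}\bigr).
\]
Because $\widetilde{F}$ is a Bessel sequence, the sequence $\{\langle f,\widetilde{f_i}\rangle\}_{i\in I}$ lies in $\ell^{2}$; in particular $g\in R(T_F)$ and this sequence is one admissible coefficient family representing $g$ through $F$.

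Next I would apply Theorem \ref{minimal-norm} directly to $g$: for every $\{c_i\}_{i\in I}$ satisfying $\sum_{i\in I}c_i f_i=g$ one has
\[
\sum_{i\in I}|c_i|^{2}=\sum_{i\in I}|\langle f,\widetilde{f_i}\rangle|^{2}+\sum_{i\in I}|c_i-\langle f,\widetilde{f_i}\rangle|^{2}\geq\sum_{i\in I}|\langle f,\widetilde{f_i}\rangle|^{2},
\]
with equality exactly when $c_i=\langle f,\widetilde{f_i}\rangle$ for all $i$. Hence $\{\langle f,\widetilde{f_i}\rangle\}_{i\in I}$ is the unique minimizer of the $\ell^{2}$-norm among all representations of $g$ by $F$. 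Since Theorem 2.5.3 of \cite{Chr08} identifies $T_F^{\dag}g$ with exactly this minimal-norm solution, the two sequences coincide, which is the assertion.

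The step demanding the most care is the opening identification $g=T_F\bigl(\{\langle f,\widetilde{f_i}\rangle\}_{i\in I}\bigr)$ and, with it, the verification that $g$ truly belongs to $R(T_F)$; this is what makes $T_F^{\dag}g$ meaningful and licenses the use of Theorem \ref{minimal-norm} for the particular element $g$. Once membership in $R(T_F)$ is in hand, the remainder is a verbatim application of the minimal-norm property of the pseudo-inverse, so no further estimates are needed.
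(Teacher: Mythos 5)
Your proposal is correct and follows exactly the route the paper intends: the paper offers no written proof beyond citing Theorem 2.5.3 of \cite{Chr08}, i.e.\ that $T_{F}^{\dag}$ applied to an element of $R(T_{F})$ returns the minimal-norm coefficient sequence, which combined with Theorem \ref{minimal-norm} is precisely your argument. Your explicit verification that $g=S_{F}\left((S_{F}|_{R(K)})^{-1}\right)^{*}Kf=T_{F}\bigl(\{\langle f,\widetilde{f_{i}}\rangle\}_{i\in I}\bigr)\in R(T_{F})$ simply fills in the detail the paper leaves implicit.
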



\section{ $K$-frame multiplier}
In this section, we introduce the notion of multiplier for $K$-frames, when $K\in B(\mathcal{H})$.
Many properties of ordinary frame multipliers, may not
hold for $K$-frame multipliers. Similar differences can be observed between frames and $K$-frames, see \cite{Xiao}.
\begin{defn}
Let $\Phi=\{\phi_{i}\}_{i\in I}$ and $\Psi=\{\psi_{i}\}_{i\in I}$ be two Bessel sequences and let  the symbol $m=\{m_{i}\}_{i\in I}\in \ell^{\infty}$. An operator $\mathcal{R}:\mathcal{H}\rightarrow \mathcal{H}$ is called a  $\textit{K-right inverse}$ of $\mathbb{M}_{m,\Phi,\Psi}$ if
\begin{eqnarray*}
\mathbb{M}_{m,\Phi,\Psi}\mathcal{R}f=Kf, \qquad (f\in \mathcal{H}),
\end{eqnarray*}
 and $\mathcal{L}:\mathcal{H}\rightarrow \mathcal{H}$ is  called a $\textit{K-left inverse}$ of $\mathbb{M}_{m,\Phi,\Psi}$ if
\begin{eqnarray*}
\mathcal{L}\mathbb{M}_{m,\Phi,\Psi}f=Kf, \qquad (f\in \mathcal{H}).
\end{eqnarray*}
Also, a $K$-inverse is a mapping in $B(\mathcal{H})$ that is both a
$K$-left and a $K$-right inverse.

\end{defn}

By using Proposition \ref{k}, we give some sufficient and necessary conditions for the $K$-right invertibility of multipliers. Also, similar to ordinary frames,
the $K$-dual systems are investigated by $K$-right inverse (resp. $K$-left inverse) of $K$-frame multipliers.

\begin{prop}\label{K}
Let $\Phi=\{\phi_{i}\}_{i\in I}$ and $\Psi=\{\psi_{i}\}_{i\in I}$ be two Bessel sequences and   $m\in \ell^{\infty}$. The following statements  are equivalent:
\begin{enumerate}
\item \label{Re1} $R(K)\subset R(\mathbb{M}_{m,\Phi,\Psi})$.
\item \label{Re2} $KK^{*}\leq \lambda^{2}\mathbb{M}_{m,\Phi,\Psi}\mathbb{M}_{m,\Phi,\Psi}^{*}$
for some $\lambda\geq 0$.
\item \label{Re3} $\mathbb{M}_{m,\Phi,\Psi}$ has a $K$-right inverse.
\end{enumerate}
\end{prop}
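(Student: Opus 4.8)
The plan is to invoke the operator-range result of Proposition \ref{k} directly, taking $L_{1}=K$ and $L_{2}=\mathbb{M}_{m,\Phi,\Psi}$, both regarded as bounded operators on $\mathcal{H}$ (so that $\mathcal{H}_{1}=\mathcal{H}_{2}=\mathcal{H}$). First I would verify that this application is legitimate: $K\in B(\mathcal{H})$ by hypothesis, and since $\Phi$ and $\Psi$ are Bessel sequences and $m\in\ell^{\infty}$, the norm estimate $\|\mathbb{M}_{m,\Phi,\Psi}\|\leq\sqrt{B_{\Phi}B_{\Psi}}\,\|m\|_{\infty}$ recalled in the introduction shows $\mathbb{M}_{m,\Phi,\Psi}\in B(\mathcal{H})$. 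With both operators in $B(\mathcal{H})$, Proposition \ref{k} applies verbatim.

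Given this, the equivalence of (1) and (2) is simply the equivalence of the first two clauses of Proposition \ref{k} specialized to this choice of $L_{1}$ and $L_{2}$: the inclusion $R(K)\subset R(\mathbb{M}_{m,\Phi,\Psi})$ is equivalent to $KK^{*}\leq\lambda^{2}\mathbb{M}_{m,\Phi,\Psi}\mathbb{M}_{m,\Phi,\Psi}^{*}$ for some $\lambda\geq 0$. No further argument is required.

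For the equivalence of (1) and (3) I would use the third clause of Proposition \ref{k}: $R(K)\subset R(\mathbb{M}_{m,\Phi,\Psi})$ holds if and only if there is a bounded operator $X\in B(\mathcal{H})$ with $K=\mathbb{M}_{m,\Phi,\Psi}X$. The only remaining task is to match this factorization against the definition of a $K$-right inverse. Reading $K=\mathbb{M}_{m,\Phi,\Psi}X$ pointwise gives $\mathbb{M}_{m,\Phi,\Psi}Xf=Kf$ for every $f\in\mathcal{H}$, which is exactly the defining relation of a $K$-right inverse with $\mathcal{R}=X$. Conversely, any $K$-right inverse $\mathcal{R}$ furnishes the factorization $K=\mathbb{M}_{m,\Phi,\Psi}\mathcal{R}$, whence $R(K)=R(\mathbb{M}_{m,\Phi,\Psi}\mathcal{R})\subseteq R(\mathbb{M}_{m,\Phi,\Psi})$; note that this direction does not even use boundedness of $\mathcal{R}$.

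Because each implication is obtained by reading off one clause of Proposition \ref{k}, there is no genuine analytic obstacle; the entire content lies in setting up the correct reduction. The single point I would be careful to articulate is the identification in the preceding paragraph, namely that the abstract intertwiner $X$ produced by Proposition \ref{k} is precisely what the paper calls a $K$-right inverse, and in particular that it is automatically bounded. Making this explicit ensures the reader sees that the operator $\mathcal{R}$ whose existence statement (3) asserts can be taken in $B(\mathcal{H})$.
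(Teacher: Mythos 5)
Your proof is correct and coincides with the paper's intended argument: the paper states this proposition without a separate proof, obtaining it as an immediate specialization of Proposition \ref{k} with $L_{1}=K$ and $L_{2}=\mathbb{M}_{m,\Phi,\Psi}$ (both acting on $\mathcal{H}$). Your additional remarks---that the multiplier is bounded because $\Phi,\Psi$ are Bessel and $m\in\ell^{\infty}$, and that the bounded factor $X$ produced by Proposition \ref{k} is exactly a $K$-right inverse---merely make explicit what the paper leaves implicit.
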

Now, we can show that a $K$-dual of a $K$-frame fulfills the lower frame condition.

\begin{lem}
Let $\Phi=\{\phi_{i}\}_{i\in I}$ and $\Psi=\{\psi_{i}\}_{i\in I}$ be two Bessel sequences and  $m\in \ell^{\infty}$.
\begin{enumerate}
\item\label{Re1} If $\mathbb{M}_{m,\Phi,\Psi}=K$,  then $\Phi$ and
$\Psi$ are $K$- frame and $K^{*}$-frame, respectively. In particular,
if $\mathbb{M}_{1,\Phi,\Psi}=K$, then $\Psi$ is a $K$-dual of $\Phi$.
\item\label{Re2}  If $\mathbb{M}_{m,\Phi,\Psi}$ has a $K$-right (resp. $K$-left) inverse, then $\Phi$ (resp. $\Psi$) is $K$-frame (resp. $K^{*}$-frame).
\end{enumerate}

\end{lem}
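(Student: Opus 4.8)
The plan is to reduce every assertion to a range inclusion and then invoke the two structural propositions already in hand. Writing the multiplier as a composition $\mathbb{M}_{m,\Phi,\Psi}=T_{\Phi}D_{m}T_{\Psi}^{*}$, where $D_{m}$ is the bounded diagonal operator $\{c_{i}\}_{i\in I}\mapsto\{m_{i}c_{i}\}_{i\in I}$ on $\ell^{2}$, two observations carry the whole proof. First, since the last factor applied is the synthesis operator $T_{\Phi}$, we have $R(\mathbb{M}_{m,\Phi,\Psi})\subseteq R(T_{\Phi})$. Second, taking adjoints gives $\mathbb{M}_{m,\Phi,\Psi}^{*}=T_{\Psi}D_{\overline{m}}T_{\Phi}^{*}=\mathbb{M}_{\overline{m},\Psi,\Phi}$, so that symmetrically $R(\mathbb{M}_{m,\Phi,\Psi}^{*})\subseteq R(T_{\Psi})$.

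I would prove the second statement first. Suppose $\mathbb{M}_{m,\Phi,\Psi}$ has a $K$-right inverse. By Proposition \ref{K} this is equivalent to $R(K)\subseteq R(\mathbb{M}_{m,\Phi,\Psi})$, which together with the first observation yields $R(K)\subseteq R(T_{\Phi})$. Because $\Phi$ is Bessel, $T_{\Phi}$ is a well-defined operator onto $R(T_{\Phi})$, so Proposition \ref{onto} gives at once that $\Phi$ is a $K$-frame. For the left-inverse case I would pass to adjoints: a $K$-left inverse $\mathcal{L}$ satisfies $\mathbb{M}_{m,\Phi,\Psi}^{*}\mathcal{L}^{*}=K^{*}$, so $\mathcal{L}^{*}$ is a $K^{*}$-right inverse of $\mathbb{M}_{\overline{m},\Psi,\Phi}$. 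Running the right-inverse argument with $K$ replaced by $K^{*}$ and $\Phi$ replaced by $\Psi$ then shows that $\Psi$ is a $K^{*}$-frame.

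The first statement now follows as a special case. The hypothesis $\mathbb{M}_{m,\Phi,\Psi}=K$ means that the identity operator is a $K$-right inverse (indeed a $K$-inverse), so the previous paragraph already yields that $\Phi$ is a $K$-frame and, via the adjoint identity $K^{*}=\mathbb{M}_{\overline{m},\Psi,\Phi}$, that $\Psi$ is a $K^{*}$-frame. For the final assertion with $m=1$, the identity $Kf=\sum_{i\in I}\langle f,\psi_{i}\rangle\phi_{i}$ holds for every $f\in\mathcal{H}$; since $Kf\in R(K)$ we have $\pi_{R(K)}Kf=Kf$, and as $\pi_{R(K)}$ is bounded it may be moved inside the convergent series to give $Kf=\sum_{i\in I}\langle f,\psi_{i}\rangle\pi_{R(K)}\phi_{i}$, which is precisely the defining relation $(\ref{dual1})$ for $\Psi$ to be a $K$-dual of $\Phi$.

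The arguments are short because the substantive content is absorbed by Propositions \ref{onto} and \ref{K}; the two points that genuinely require care are the correct identification $\mathbb{M}_{m,\Phi,\Psi}^{*}=\mathbb{M}_{\overline{m},\Psi,\Phi}$, which is what lets the left-inverse case reduce cleanly to a right-inverse statement for $K^{*}$, and the justification that the bounded projection $\pi_{R(K)}$ commutes with the infinite sum in the $K$-dual step. Neither is a serious obstacle, but both are spots where a conjugation or an index swap could silently derail the reduction, so I would verify them explicitly rather than by symmetry alone.
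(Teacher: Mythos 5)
Your proof is correct, but it takes a genuinely different route from the paper's. The paper argues directly: for part (\ref{Re1}) it writes $\|K^{*}f\|^{4}=\left|\left\langle \mathbb{M}_{m,\Phi,\Psi}K^{*}f,f\right\rangle\right|^{2}$, expands the inner product, and applies Cauchy--Schwarz together with the Bessel bound of $\Psi$ to get
\begin{equation*}
\|K^{*}f\|^{2}\leq \sup_{i\in I}|m_{i}|^{2}\,B_{\Psi}\sum_{i\in I}\left|\langle f,\phi_{i}\rangle\right|^{2},
\end{equation*}
and for part (\ref{Re2}) it estimates $\|K^{*}f\|^{2}=\|\mathcal{R}^{*}\mathbb{M}_{m,\Phi,\Psi}^{*}f\|^{2}\leq \|\mathcal{R}\|^{2}\|\mathbb{M}_{m,\Phi,\Psi}^{*}f\|^{2}$ the same way; each case is handled separately and yields an explicit lower $K$-frame bound such as $\left(\sup_{i}|m_{i}|^{2}B_{\Psi}\|\mathcal{R}\|^{2}\right)^{-1}$. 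You instead factor the multiplier as $T_{\Phi}D_{m}T_{\Psi}^{*}$, reduce everything to the range inclusions $R(K)\subseteq R(\mathbb{M}_{m,\Phi,\Psi})\subseteq R(T_{\Phi})$, and invoke Propositions \ref{K} and \ref{onto}; part (\ref{Re1}) then falls out of part (\ref{Re2}) because the identity is simultaneously a $K$-right and $K$-left inverse of $\mathbb{M}_{m,\Phi,\Psi}=K$. Your approach is more structural and unifies the two parts, and your explicit verification of $\mathbb{M}_{m,\Phi,\Psi}^{*}=\mathbb{M}_{\overline{m},\Psi,\Phi}$ and of moving $\pi_{R(K)}$ inside the series is actually more careful than the paper, which asserts the latter equality without comment. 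The trade-offs: your constants are implicit (they come from the $\lambda$ in the Douglas-type Proposition \ref{k}), whereas the paper's direct computation produces them explicitly; and you inherit whatever is needed for Proposition \ref{onto}, whose proof in the paper uses the pseudo-inverse $T_{F}^{\dag}$ and hence tacitly wants $R(T_{F})$ closed --- though the implication you actually need (Bessel plus $R(K)\subseteq R(T_{\Phi})$ implies the lower $K$-frame bound) follows from Proposition \ref{k} alone, so you could bypass Proposition \ref{onto} entirely and make your argument self-contained.
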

\begin{proof}
(\ref{Re1}) Let $\mathbb{M}_{m,\Phi,\Psi}=K$. Then
\begin{eqnarray*}
\|K^{*}f\|^{4}&=&
\left|\left\langle \mathbb{M}_{m,\Phi,\Psi}K^{*}f,f\right\rangle
\right|^{2}\\
&=&\left|\sum_{i\in I}m_{i}\left\langle K^{*}f,\psi_{i}\right\rangle
\left\langle\phi_{i},f\right\rangle\right|^{2}\\
&\leq&\sup_{i\in I}|m_{i}|\|K^{*}f\|^{2}B_{\Psi}\sum_{i\in I}\left|\langle\phi_{i},f\rangle\right|^{2},
\end{eqnarray*}
for every $f\in \mathcal{H}$. Therefore $\Phi$ is $K$-frame. Similarly
\begin{eqnarray*}
\|Kf\|^{4}&=&
\left|\left\langle \mathbb{M}_{m,\Phi,\Psi}^{*}Kf,f\right\rangle\right|^{2}\\
&=&\left|\sum_{i\in I}\overline{m_{i}}\langle Kf,\phi_{i}\rangle\langle \psi_{i},f\rangle\right|^{2}\\
&\leq&\sup_{i\in I}|m_{i}|\|Kf\|^{2}B_{\Phi}\sum_{i\in I}\left|\langle\psi_{i},f\rangle\right|^{2}.
\end{eqnarray*}
Namely, $\Psi$ is a $K^{*}$-frame. In particular,
\begin{eqnarray*}
Kf&=&\sum_{i\in I}\langle f,\psi_{i}\rangle \phi_{i}\\
&=&\sum_{i\in I}\langle f,\psi_{i}\rangle \pi_{R(K)}\phi_{i}\\
\end{eqnarray*}

(\ref{Re2}) Let $\mathcal{R}$ is a $K$-right inverse of $\mathbb{M}_{m,\Phi,\Psi}$. Then
\begin{eqnarray*}
\|K^{*}f\|^{2}&=&\|\mathcal{R}^{*}\mathbb{M}_{m,\Phi,\Psi}^{*}f\|^{2}\\
&\leq&\left\|\mathcal{R}^{*}\right\|^{2}\left\|\sum_{i\in I}m_{i}\langle f,\phi_{i}\rangle \psi_{i}\right\|^{2}\\
&\leq& \sup_{i\in I}|m_{i}|\left\|\mathcal{R}\right\|^{2}B_{\Psi}\sum_{i\in I}|\langle f,\phi_{i}\rangle|^{2}.
\end{eqnarray*}
The other case is similar.
\end{proof}
In the following, we discuss about $K$-left and $K$-right invertibility of
a multiplier.
\begin{thm}
Let $\Phi=\{\phi_{i}\}_{i\in I}$ and $\Psi=\{\psi_{i}\}_{i\in I}$
be two Bessel sequences. Also, let $\mathcal{L}$ (resp. $\mathcal{R}$) be a $K$-left (resp. $K$-right) inverse of $\mathbb{M}_{1,\pi_{R(K)}\Phi,\Psi}$ (resp. $\mathbb{M}_{1,\Phi,\pi_{R(K^{*})}\Psi}$). Then $\mathcal{L}K$ (resp. $\mathcal{R}K$) are of the form of multipliers.
\end{thm}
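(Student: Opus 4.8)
The plan is to realize each composition with $K$ as a single multiplier of symbol $1$, by absorbing the one-sided inverse into the synthesis sequence (for $\mathcal{L}$) or, via its adjoint, into the analysis sequence (for $\mathcal{R}$). I will use throughout the elementary fact that if $\{\eta_i\}_{i\in I}$ is Bessel and $U\in B(\mathcal{H})$, then $\{U\eta_i\}_{i\in I}$ is Bessel as well, since $\sum_{i\in I}|\langle f,U\eta_i\rangle|^2=\sum_{i\in I}|\langle U^*f,\eta_i\rangle|^2\leq B\|U\|^2\|f\|^2$; this guarantees that each multiplier I construct is well defined. I also use that a bounded operator may be moved through the norm-convergent synthesis sum $\sum_{i\in I}c_i\eta_i$ associated with $\{c_i\}_{i\in I}\in\ell^2$.

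First I would treat the $K$-left inverse. Writing $\mathbb{M}:=\mathbb{M}_{1,\pi_{R(K)}\Phi,\Psi}$, the defining identity $\mathcal{L}\mathbb{M}f=Kf$ reads $\mathcal{L}\sum_{i\in I}\langle f,\psi_i\rangle\pi_{R(K)}\phi_i=Kf$; pulling $\mathcal{L}$ inside the sum shows that $K=\mathbb{M}_{1,\mathcal{L}\pi_{R(K)}\Phi,\Psi}$ is already a multiplier with Bessel synthesis sequence $\{\mathcal{L}\pi_{R(K)}\phi_i\}_{i\in I}$. Composing once more with $\mathcal{L}$ and again moving it through the sum gives
\begin{equation*}
\mathcal{L}Kf=\mathcal{L}^{2}\sum_{i\in I}\langle f,\psi_i\rangle\pi_{R(K)}\phi_i=\sum_{i\in I}\langle f,\psi_i\rangle\,\mathcal{L}^{2}\pi_{R(K)}\phi_i,
\end{equation*}
so $\mathcal{L}K=\mathbb{M}_{1,\mathcal{L}^{2}\pi_{R(K)}\Phi,\Psi}$, the synthesis sequence $\{\mathcal{L}^{2}\pi_{R(K)}\phi_i\}_{i\in I}$ being Bessel by the fact above.

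For the $K$-right inverse I would argue symmetrically, now absorbing $\mathcal{R}$ into the analysis side through an adjoint. The identity $\mathbb{M}_{1,\Phi,\pi_{R(K^*)}\Psi}\mathcal{R}f=Kf$ expands to $\sum_{i\in I}\langle\mathcal{R}f,\pi_{R(K^*)}\psi_i\rangle\phi_i=Kf$, and since $\langle\mathcal{R}f,\pi_{R(K^*)}\psi_i\rangle=\langle f,\mathcal{R}^*\pi_{R(K^*)}\psi_i\rangle$ this says $K=\mathbb{M}_{1,\Phi,\mathcal{R}^*\pi_{R(K^*)}\Psi}$. Applying $\mathcal{R}$, expanding $K$ by this identity, and moving $\mathcal{R}$ through the synthesis sum then yields
\begin{equation*}
\mathcal{R}Kf=\sum_{i\in I}\langle f,\mathcal{R}^*\pi_{R(K^*)}\psi_i\rangle\,\mathcal{R}\phi_i=\mathbb{M}_{1,\mathcal{R}\Phi,\mathcal{R}^*\pi_{R(K^*)}\Psi}f,
\end{equation*}
so that $\mathcal{R}K=\mathbb{M}_{1,\mathcal{R}\Phi,\mathcal{R}^*\pi_{R(K^*)}\Psi}$, both $\{\mathcal{R}\phi_i\}_{i\in I}$ and $\{\mathcal{R}^*\pi_{R(K^*)}\psi_i\}_{i\in I}$ being Bessel.

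The calculations are routine; the two points needing care are that $\mathcal{L}$ and $\mathcal{R}$ must be bounded (so the constructed sequences are Bessel and the multipliers exist) and the justification for commuting a bounded operator with the infinite synthesis sum, which follows from norm-convergence of that sum for $\ell^2$ coefficients together with continuity. I expect the boundedness hypothesis to be the only genuine obstacle, since the definition of $K$-left/$K$-right inverse as stated asks merely for a map on $\mathcal{H}$; I would therefore make the standing assumption $\mathcal{L},\mathcal{R}\in B(\mathcal{H})$ explicit.
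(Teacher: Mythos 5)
Your argument is correct, and it takes a genuinely different route from the paper's. Both proofs begin identically, by absorbing the one-sided inverse into the sequences to get $K=\mathbb{M}_{1,\mathcal{L}\pi_{R(K)}\Phi,\Psi}$ (resp. $K=\mathbb{M}_{1,\Phi,\mathcal{R}^{*}\pi_{R(K^{*})}\Psi}$), which the paper phrases as the duality statements that $\Psi$ is a $K$-dual of $\mathcal{L}\pi_{R(K)}\Phi$ and $\Phi$ is a $K^{*}$-dual of $\mathcal{R}^{*}\pi_{R(K^{*})}\Psi$. The divergence is in the second step: you simply iterate, obtaining $\mathcal{L}K=\mathbb{M}_{1,\mathcal{L}^{2}\pi_{R(K)}\Phi,\Psi}$ and $\mathcal{R}K=\mathbb{M}_{1,\mathcal{R}\Phi,\mathcal{R}^{*}\pi_{R(K^{*})}\Psi}$, whereas the paper invokes an arbitrary $K$-dual $\Phi^{\dag}$ of $\Phi$ (resp. a $K^{*}$-dual $\Psi^{\dag}$ of $\Psi$) and writes $\mathcal{L}K=\mathbb{M}_{1,\mathcal{L}\pi_{R(K)}\Phi,\Phi^{\dag}}$ (resp. $\mathbb{M}_{1,\Psi^{\dag},\mathcal{R}^{*}\pi_{R(K^{*})}\Psi}$). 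Your route buys two things. First, it requires no existence of such duals: the hypotheses give only Bessel sequences and a one-sided inverse, and these do not force $\Phi$ to admit a $K$-dual or $\Psi$ a $K^{*}$-dual, so the paper's proof carries an unstated extra assumption that yours avoids. Second, in the right-inverse case the paper's multiplier actually evaluates to $K\mathcal{R}f$ rather than $\mathcal{R}Kf$: adjointing the relation $K^{*}g=\sum_{i\in I}\langle g,\psi_{i}^{\dag}\rangle\pi_{R(K^{*})}\psi_{i}$ gives $\sum_{i\in I}\langle h,\pi_{R(K^{*})}\psi_{i}\rangle\psi_{i}^{\dag}=Kh$ for all $h$, hence $\mathbb{M}_{1,\Psi^{\dag},\mathcal{R}^{*}\pi_{R(K^{*})}\Psi}f=K\mathcal{R}f$, which coincides with $\mathcal{R}Kf$ only when $K$ and $\mathcal{R}$ commute; your direct computation lands exactly on $\mathcal{R}K$ and so actually repairs this point. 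What the paper's version buys, when the duals do exist, is a structurally tighter representation (a single factor of $\mathcal{L}$, with any $K$-dual of $\Phi$ serving as the analysis sequence), which ties the theorem to the $K$-duality machinery of Section 2. Finally, your standing assumption $\mathcal{L},\mathcal{R}\in B(\mathcal{H})$ is the right call: the paper uses it silently as well, both to pull operators through the norm-convergent synthesis sums and to keep the transformed sequences Bessel.
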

\begin{proof}
It is obvious to check that $\mathcal{L}\Phi$ is a Bessel sequence.
Also, note that   $\Psi$ is a $K$-dual  of $\mathcal{L}\pi_{R(K)}\Phi$. Indeed,
\begin{eqnarray*}
Kf&=&\mathcal{L}\mathbb{M}_{1,\pi_{R(K)}\Phi,\Psi}f\\
&=&\sum_{i\in I}\left\langle f,\psi_{i}\right\rangle\mathcal{L}\pi_{R(K)}\phi_{i}\\
&=&\sum_{i\in I}\left\langle f,\psi_{i}\right\rangle\pi_{R(K)}\mathcal{L}\pi_{R(K)}\phi_{i}
,\ \ \qquad(f\in \mathcal{H}).
\end{eqnarray*}
Now, if $\Phi^{\dag}$ is any $K$-dual of $\Phi$, then
\begin{eqnarray*}
\mathbb{M}_{1,\mathcal{L}\pi_{R(K)}\Phi,\Phi^{\dag}}f&=&\sum_{i\in I}
\left\langle f,\phi_{i}^{\dag}\right\rangle\mathcal{L}\pi_{R(K)}\phi_{i}\\
&=&\mathcal{L}\sum_{i\in I}
\left\langle f,\phi_{i}^{\dag}\right\rangle\pi_{R(K)}\phi_{i}=\mathcal{L}Kf,
\end{eqnarray*}
for all $f\in \mathcal{H}$.
For the statement for $\mathcal{R}$, for all $f\in \mathcal{H}$ we have
\begin{eqnarray*}
K^{*}f&=&\mathcal{R}^{*}\mathbb{M}_{1,\Phi,\pi_{R(K^{*})}\Psi}^{*}f\\
&=&\mathcal{R}^{*}\mathbb{M}_{1,\pi_{R(K^{*})}\Psi,\Phi}f\\
&=&\sum_{i\in I}\left\langle f,\phi_{i}\right\rangle\mathcal{R}^{*}\pi_{R(K^{*})}\psi_{i}\\
&=&\sum_{i\in I}\left\langle f,\phi_{i}\right\rangle\pi_{R(K^{*})}\mathcal{R}^{*}\pi_{R(K^{*})}\psi_{i}.
\end{eqnarray*}  
So,  $\Phi$ is a $K^{*}$-dual of $\mathcal{R}^{*}\pi_{R(K^{*})}\Psi$. Furthermore, every $K^{*}$-dual $\Psi^{\dag}$ of $\Psi$  yields 
\begin{eqnarray*}
\mathbb{M}_{1,\Psi^{\dag},\mathcal{R}^{*}\pi_{R(K^{*}})\Psi} f
&=&\sum_{i\in I}\left\langle \mathcal{R}f,\pi_{R(K^{*})}\psi_{i}\right\rangle \psi_{i}^{\dag}\\
&=&\mathcal{R}Kf.
\end{eqnarray*}

\end{proof}
A sequence $F=\{f_{i}\}_{i\in I}$ of $\mathcal{H}$ is called
a \textit{minimal $K$-frame} whenever  it is a $K$-frame and for each $\{c_{i}\}_{i\in I}
\in \ell^{2}$ such that $\sum_{i\in I}c_{i}f_{i}=0$ then $c_{i}=0$
for all $i\in I$.
A minimal $K$-frame and its canonical $K$-dual are not
biorthogonal in general. To see this,
let $\mathcal{H}=\mathbb{C}^{4}$ and $\{e_{i}\}_{i=1}^{4}$ be the standard orthonormal basis of $\mathcal{H}$. Define $K:\mathcal{H}\rightarrow\mathcal{H}$ by
\begin{eqnarray*}
K\sum_{i=1}^{4}c_{i}e_{i}=c_{1}e_{1}+c_{1}e_{2}+c_{2}e_{3}.
\end{eqnarray*}
Then $K\in B(\mathcal{H})$ and the sequence $F=\{e_{1},e_{2},e_{3}\}$
is a minimal $K$-frame with  the bounds $A=\frac{1}{8}$ and $B=1$. It is easy to see that $\widetilde{F}=\{e_{1},e_{1},e_{2}\}$ is the canonical $K$-dual of $F$ and $\langle f_{1},\widetilde{f}_{2}\rangle\neq0$.
However, every minimal Bessel sequence, and therefore every minimal $K$-frame has a biorthogonal sequence in $\mathcal{H}$ by Lemma 5.5.3 of \cite{Chr08}.
It is worthwhile to mention that a minimal $K$-frame may have more than one biorthogonal sequence in $\mathcal{H}$, but it is
unique in $\overline{span}_{i\in I}\{f_{i}\}$.

Let $\Phi=\{\phi_{i}\}_{i\in I}$  be a $K$-frame and  $\Psi=\{\psi_{i}\}_{i\in I}$ a minimal $K^{*}$-frame.
Then $\mathbb{M}_{1,\pi_{R(K)}\Phi,\Psi}$ (resp. $\mathbb{M}_{1,\Psi,\pi_{R(K)}\Phi}$) has  a $K$-right inverse (resp. $K^{*}$-left inverse ) in the form of multipliers.
Indeed, if $G:=\{g_{i}\}_{i\in I}$ is a biorthogonal sequence for minimal $K^{*}$-frame $\Psi$, then
\begin{eqnarray*}
\mathbb{M}_{1,\pi_{R(K)}\Phi,\Psi}\mathbb{M}_{1,G,\widetilde{\Phi}}f&=&
\sum_{i,j\in I}\langle f,\widetilde{\phi_{i}}\rangle\langle g_{i},\psi_{j}\rangle \pi_{R(K)}\phi_{j}\\
&=&\sum_{i\in I}\langle f,\widetilde{\phi_{i}}\rangle \pi_{R(K)}\phi_{i}=Kf,
\end{eqnarray*}
for all $f\in \mathcal{H}$. Similarly
\begin{eqnarray*}
\mathbb{M}_{1,\widetilde{\Phi},G}\mathbb{M}_{1,\Psi,\pi_{R(K)}\Phi}f&=&
\sum_{i,j\in I}\left\langle f,\pi_{R(K)}\phi_{i}\right\rangle\left\langle \psi_{i},g_{j}\right\rangle \widetilde{\phi}_{j}\\
&=&\sum_{i\in I}\left\langle f,\pi_{R(K)}\phi_{i}\right\rangle \widetilde{\phi_{i}}
=K^{*}f.
\end{eqnarray*}
We  use the following lemma for the invertibility of operators, whose proof is left to the reader.
\begin{lem}\label{Inver}
Let $\mathcal{H}_{1}$ and $\mathcal{H}_{2}$  be two Hilbert spaces and $T\in B(\mathcal{H}_{1},\mathcal{H}_{2})$ be invertible. Suppose $U\in B(\mathcal{H}_{1},\mathcal{H}_{2})$ such that  $\|T-U\|<\|T^{-1}\|^{-1}$. Then $U$ is also invertible.
\end{lem}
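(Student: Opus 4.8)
The plan is to reduce the claim to the standard Neumann series criterion for invertibility, exploiting the fact that $B(\mathcal{H}_{1})$ is a Banach algebra in which an operator close to the identity is automatically invertible.

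First I would factor $U$ through $T$. Writing $S=T-U\in B(\mathcal{H}_{1},\mathcal{H}_{2})$, one has $T^{-1}S\in B(\mathcal{H}_{1})$ and the algebraic identity
\[
U=T-S=T\left(I_{\mathcal{H}_{1}}-T^{-1}S\right),
\]
which is immediate since $TT^{-1}S=S$. Thus $U$ is the composition of $T$, which is invertible by hypothesis, with the operator $I_{\mathcal{H}_{1}}-T^{-1}S$; it therefore suffices to prove that this second factor is invertible.

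Next I would estimate the norm of the perturbation. Submultiplicativity of the operator norm together with the hypothesis $\|T-U\|<\|T^{-1}\|^{-1}$ gives
\[
\left\|T^{-1}S\right\|\leq\left\|T^{-1}\right\|\,\|T-U\|<\left\|T^{-1}\right\|\cdot\left\|T^{-1}\right\|^{-1}=1.
\]
Since $\|T^{-1}S\|<1$ and $B(\mathcal{H}_{1})$ is complete, the Neumann series $\sum_{n=0}^{\infty}(T^{-1}S)^{n}$ converges in operator norm and provides a bounded inverse of $I_{\mathcal{H}_{1}}-T^{-1}S$. Consequently $I_{\mathcal{H}_{1}}-T^{-1}S$ is invertible, and as a product of two invertible operators $U=T\left(I_{\mathcal{H}_{1}}-T^{-1}S\right)$ is invertible, with $U^{-1}=\left(I_{\mathcal{H}_{1}}-T^{-1}S\right)^{-1}T^{-1}$.

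There is no serious obstacle here: the only points requiring care are the bookkeeping of domains and codomains, so that $T^{-1}S$ genuinely acts on $\mathcal{H}_{1}$ and the correct identity operator $I_{\mathcal{H}_{1}}$ appears in the factorization, and the invocation of the convergence of the Neumann series, which relies only on the completeness of $B(\mathcal{H}_{1})$ and submultiplicativity of the norm. If one prefers to avoid citing the Neumann series explicitly, the same conclusion follows from a contraction argument showing directly that $I_{\mathcal{H}_{1}}-T^{-1}S$ is bijective with bounded inverse.
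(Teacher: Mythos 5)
Your proof is correct: the factorization $U=T\left(I_{\mathcal{H}_{1}}-T^{-1}(T-U)\right)$, the estimate $\|T^{-1}(T-U)\|<1$, and the Neumann series inversion of $I_{\mathcal{H}_{1}}-T^{-1}(T-U)$ are all valid, and the domain/codomain bookkeeping is handled properly. The paper itself gives no proof (it explicitly leaves the lemma to the reader), and your argument is precisely the standard one this omission presupposes, so there is nothing to reconcile.
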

 In the rest of this section we state a sufficient condition for the $K$-right invertibility of $\mathbb{M}_{m,\Psi,\Phi}$, whenever $\Psi$ is a
perturbation of $\Phi$.
\begin{thm}\label{right}
Let  $\Phi=\{\phi_{i}\}_{i\in I}$ be a $K$-frame  with
 bounds $A$ and $B$, respectively, and $\Psi=\{\psi_{i}\}_{i\in I}$ be a Bessel sequence such that
\begin{eqnarray}\label{kright}
\left(\sum_{i\in I}\left|\langle f,\psi_{i}-\phi_{i}\rangle\right|^{2}\right)^{\frac{1}{2}}
\leq \frac{aA}{b\sqrt{B}\|K^{\dag}\|^{2}}\|f\|,\qquad (f\in R(K)),
\end{eqnarray}
where $m=\{m_{i}\}_{i\in I}$ is a semi-normalized sequence with bounds $a$ and $b$, respectively.
Then
\begin{enumerate}
\item\label{Re1}  The sequence  $\Psi$  has a $K$-dual. In particular, it is a  $K$-frame.
 \item\label{Re2}  $\mathbb{M}_{\overline{m},\Psi,\Phi}$ has a $K$-right inverse in the form of multipliers.
\end{enumerate}
\end{thm}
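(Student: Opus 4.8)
The plan is to establish the $K$-right invertibility in part (2) first and then read off part (1) from it. Indeed, once $\mathbb{M}_{\overline{m},\Psi,\Phi}$ is known to possess a $K$-right inverse, the lemma preceding this theorem (the one deriving the $K$-frame property of the \emph{synthesis} sequence of a multiplier from its $K$-right invertibility) applies with synthesis sequence $\Psi$ and shows at once that $\Psi$ is a $K$-frame; and since $K$ has closed range, the canonical $K$-dual construction of the earlier proposition then furnishes a concrete $K$-dual of $\Psi$. Thus both assertions of (1) are consequences of (2), and I would record this reduction at the outset, so that the entire work concentrates on producing the $K$-right inverse.

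For (2) I would run a perturbation argument anchored at the unperturbed weighted operator $\mathbb{M}_{\overline{m},\Phi,\Phi}$. The first step is to show this reference operator is bounded below on $R(K)$: combining the lower $K$-frame inequality $\sum_i|\langle f,\phi_i\rangle|^2\ge A\|K^*f\|^2$ with $|m_i|\ge a$ and the standard estimate $\|K^*f\|\ge\|K^{\dag}\|^{-1}\|f\|$ for $f\in R(K)$ (the property underlying (\ref{bound S})) yields $\langle \mathbb{M}_{\overline{m},\Phi,\Phi}f,f\rangle\ge aA\|K^*f\|^2\ge aA\|K^{\dag}\|^{-2}\|f\|^2$, hence $\|\mathbb{M}_{\overline{m},\Phi,\Phi}f\|\ge aA\|K^{\dag}\|^{-2}\|f\|$ on $R(K)$. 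The second step measures the perturbation through the adjoint: since $(\mathbb{M}_{\overline{m},\Psi,\Phi}-\mathbb{M}_{\overline{m},\Phi,\Phi})^*f=\sum_i m_i\langle f,\psi_i-\phi_i\rangle\phi_i$, the Bessel bound $B$ of $\Phi$, the bound $|m_i|\le b$, and hypothesis (\ref{kright}) give $\|(\mathbb{M}_{\overline{m},\Psi,\Phi}-\mathbb{M}_{\overline{m},\Phi,\Phi})^*f\|\le b\sqrt{B}\big(\sum_i|\langle f,\psi_i-\phi_i\rangle|^2\big)^{1/2}\le aA\|K^{\dag}\|^{-2}\|f\|$ for $f\in R(K)$. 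With the reference bounded below by exactly this constant, Lemma~\ref{Inver} (applied to the compressions to $R(K)$) keeps the perturbed operator bounded below there, and Proposition~\ref{K} then upgrades this to $R(K)\subseteq R(\mathbb{M}_{\overline{m},\Psi,\Phi})$, i.e.\ to the existence of a $K$-right inverse $\mathcal{R}$.

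To see that $\mathcal{R}$ can be taken in the form of a multiplier, I would use that pre- or post-composing a multiplier with a bounded operator again produces a multiplier, the synthesis or analysis sequence simply being transformed by that operator; the inverse obtained on $R(K)$, composed with $K$ and the relevant projections and carrying the symbol $1/\overline{m}$ (legitimate since $|m_i|\ge a$), then reassembles as a multiplier. The step I expect to be the genuine obstacle is the passage from \emph{boundedly invertible on} $R(K)$ to the global inclusion $R(K)\subseteq R(\mathbb{M}_{\overline{m},\Psi,\Phi})$: the estimate from (\ref{kright}) only controls behaviour on $R(K)$, whereas Proposition~\ref{K}(\ref{Re2}) is a condition on all of $\mathcal{H}$, so one must rule out that components in $N(K^*)=R(K)^{\perp}$ spoil the inequality. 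A closely related delicate point is that the lower bound $\langle \mathbb{M}_{\overline{m},\Phi,\Phi}f,f\rangle\ge aA\|K^*f\|^2$ really uses positivity of the weights $\overline{m_i}$; for a general complex semi-normalized symbol cancellations can occur, so the constants in (\ref{kright}) and the strict inequality demanded by Lemma~\ref{Inver} (here the two constants coincide) would have to be tracked with care, and it is on reconciling these that I would focus the bulk of the argument.
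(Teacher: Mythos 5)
Your reduction of part (1) to part (2) is sound, and is arguably cleaner than the paper's ordering: the lemma on $K$-right invertibility does show that the synthesis sequence $\Psi$ of $\mathbb{M}_{\overline{m},\Psi,\Phi}$ is a $K$-frame, and the canonical $K$-dual construction is available because the appearance of $K^{\dag}$ in (\ref{kright}) already presupposes $R(K)$ closed. Your choice of anchor is also, in substance, the paper's: the paper perturbs around the frame operator $S_{\Phi^{\dag}}$ of $\{\sqrt{m_{i}}\phi_{i}\}_{i\in I}$, which for the (implicitly positive) symbols both arguments require is exactly your $\mathbb{M}_{\overline{m},\Phi,\Phi}$; the paper gets its invertibility on $R(K)$ with $\|S_{\Phi^{\dag}}^{-1}\|\leq \|K^{\dag}\|^{2}/(aA)$ directly from (\ref{bound S}) rather than from a quadratic-form estimate.

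The execution, however, has two genuine gaps. First, your perturbation step mixes the two sides of the adjoint: your lower bound is for $\mathbb{M}_{\overline{m},\Phi,\Phi}f$ with $f\in R(K)$, while your smallness estimate controls only the adjoint of the difference, $\sum_{i}m_{i}\langle f,\psi_{i}-\phi_{i}\rangle\phi_{i}$ for $f\in R(K)$. Lemma \ref{Inver} needs both bounds for the same operator between the same spaces, and (\ref{kright}) does \emph{not} control $\|\sum_{i}\overline{m_{i}}\langle f,\phi_{i}\rangle(\psi_{i}-\phi_{i})\|$ on $R(K)$ (the dual pairing there runs over all of $\mathcal{H}$, not over $R(K)$), which is what perturbing $\mathbb{M}_{\overline{m},\Phi,\Phi}|_{R(K)}$ would require. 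The paper avoids this by working on one side throughout: it perturbs $U:=\mathbb{M}_{m,\Phi,\Psi}|_{R(K)}$, the adjoint of your operator, for which $(U-S_{\Phi^{\dag}})f=\sum_{i}m_{i}\langle f,\psi_{i}-\phi_{i}\rangle\phi_{i}$ is precisely the quantity (\ref{kright}) bounds, and obtains an inverse $\mathbb{M}^{-1}$ of $U$ onto $\mathbb{M}_{m,\Phi,\Psi}(R(K))$. Second --- and this is the step you yourself concede is open --- you never actually produce the $K$-right inverse: deducing $R(K)\subseteq R(\mathbb{M}_{\overline{m},\Psi,\Phi})$ from Proposition \ref{K} founders exactly on the fact that (\ref{kright}) says nothing on $R(K)^{\perp}$, and the paper never proves that inclusion at all. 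Instead it computes the adjoint of the restriction, $(\mathbb{M}_{m,\Phi,\Psi}|_{R(K)})^{*}f=\sum_{i}\overline{m_{i}}\langle f,\phi_{i}\rangle\pi_{R(K)}\psi_{i}$, and takes adjoints in $\mathbb{M}^{-1}U=I_{R(K)}$ to get $Kf=\sum_{i}\langle f,K^{*}\mathbb{M}^{-1}\pi_{\mathbb{M}_{m,\Phi,\Psi}(R(K))}m_{i}\phi_{i}\rangle\pi_{R(K)}\psi_{i}$; this one identity simultaneously yields the $K$-dual of $\Psi$ in (1) and exhibits $(\mathbb{M}^{-1})^{*}K$ as the right inverse in (2), written as the multiplier $\mathbb{M}_{1,(\mathbb{M}^{-1})^{*}\pi_{R(K)}\Phi,\Phi^{d}}$ for any $K$-dual $\Phi^{d}$ of $\Phi$ (no inverted symbol $1/\overline{m}$ is involved). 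That adjoint computation is the idea missing from your plan. Your remaining worries --- positivity of the $m_{i}$, and the non-strict constant in (\ref{kright}) versus the strict inequality demanded by Lemma \ref{Inver} --- are legitimate, but they afflict the paper's own proof equally (its identity $\mathbb{M}_{m,\Phi,\Psi}-S_{\Phi^{\dag}}=\sum_{i}m_{i}\langle\cdot,\psi_{i}-\phi_{i}\rangle\phi_{i}$ also needs $m_{i}=|m_{i}|$), so they do not separate your argument from the paper's; the two structural gaps above do.
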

\begin{proof}
(\ref{Re1})
Obviously $\Phi^{\dag}:=\{\sqrt{m_{i}}\phi_{i}\}_{i\in I}$ is a $K$-frame for $\mathcal{H}$ with bounds $aA$ and $bB$, respectively. Denote its frame operator by $S_{\Phi^{\dag}}$. Due to (\ref{bound S}) we obtain
 $\left\|S_{\Phi^{\dag}}^{-1}\right\|\leq \frac{\|K^{\dag}\|^{2}}{aA}$.  Moreover, (\ref{kright}) follows that
\begin{eqnarray*}
\|\mathbb{M}_{m,\Phi,\Psi}f-S_{\Phi^{\dag}}f\|&=&\left\|\sum_{i\in I}
m_{i}\left\langle f,\psi_{i}-\phi_{i}\right\rangle\phi_{i}\right\|\\
&\leq & b\left(\sum_{i\in I}\left|\langle f,\psi_{i}-\phi_{i}\rangle\right|^{2}\right)^{\frac{1}{2}}
\sqrt{B}\\
&\leq& \frac{aA}{\|K^{\dag}\|^{2}}\|f\|\\
&\leq& \frac{1}{\left\|S_{\Phi^{\dag}}^{-1}\right\|}\|f\|,
\end{eqnarray*}
for all $f\in R(K)$. Then  $\mathbb{M}_{m,\Phi,\Psi}$ has an inverse on $R(K)$,  denoted by $\mathbb{M}^{-1}$, by using Lemma \ref{Inver}.
Also, for $\mathbb{M}_{m,\Phi,\Psi}$ on $R(K)$ we have
\begin{eqnarray*}
 \left\langle (\mathbb{M}_{m,\Phi,\Psi})^{*}f,g\right\rangle&=&
 \left\langle f, \mathbb{M}_{m,\Phi,\Psi}\pi_{R(K)}g\right\rangle\\
 &=& \left\langle f, \sum_{i\in I}{m_{i}}\left\langle \pi_{R(K)}g,\psi_{i}\right\rangle
 \phi_{i}\right\rangle\\
  &=& \left\langle f, \sum_{i\in I}{m_{i}}\left\langle g,\pi_{R(K)}\psi_{i}\right\rangle
 \phi_{i}\right\rangle\\
 &=& \left\langle \sum_{i\in I}\overline{m_{i}}\left\langle f,\phi_{i}\right\rangle \pi_{R(K)}\psi_{i},g\right\rangle,
  \end{eqnarray*}
for all $f\in \mathbb{M}_{m,\Phi,\Psi}(R(K))$   and $g\in R(K)$.
     Using this fact, we obtain that
\begin{eqnarray*}
Kf&=&(\mathbb{M}^{-1}\mathbb{M}_{m,\Phi,\Psi})^{*}Kf\\
&=& \mathbb{M}_{m,\Phi,\Psi}^{*}\pi_{\mathbb{M}_{m,\Phi,\Psi}(R(K))}
(\mathbb{M}^{-1})^{*}Kf\\
&=&\sum_{i\in I}\overline{m_{i}}\left\langle \pi_{\mathbb{M}_{m,\Phi,\Psi}(R(K))} (\mathbb{M}^{-1})^{*}Kf,\phi_{i}\right\rangle\pi_{R(K)}\psi_{i}\\
&=&\sum_{i\in I}\left\langle f,K^{*}\mathbb{M}^{-1}
\pi_{\mathbb{M}_{m,\Phi,\Psi}(R(K))}m_{i}\phi_{i}\right\rangle\pi_{R(K)}\psi_{i}.
\end{eqnarray*}
Hence, $\{K^{*}\mathbb{M}_{m,\Phi,\Psi}^{-1}\pi_{\mathbb{M}_{m,\Phi,\Psi}(R(K))}m_{i}\phi
 _{i}\}_{i\in I}$ is a $K$-dual of $\Psi:=\{\psi_{i}\}_{i\in I}$.

(\ref{Re2}) The above computations  shows that  $(\mathbb{M}^{-1})^{*}K$ is a $K$-right inverse of $\mathbb{M}_{\overline{m},\Psi,\Phi}$. On the other hand, for every $K$-dual $\Phi^{d}$ of $\Phi$ we have
\begin{eqnarray*}
\mathbb{M}_{1,(\mathbb{M}^{-1})^{*}\pi_{R(K)}\Phi,\Phi^{d}}f&=&
\sum_{i\in I}\left\langle f,\phi_{i}^{d}\right\rangle (\mathbb{M}^{-1})^{*}\pi_{R(K)}\phi_{i}\\
&=&(\mathbb{M}^{-1})^{*}Kf,
\end{eqnarray*}
for all $f\in \mathcal{H}$. This completes the proof.
\end{proof}

The next theorem determines a class of multipliers which are $K$-right invertible and whose $K$-right inverse can be written as a multiplier.
\begin{thm}
Let  $\Psi=\{\psi_{i}\}_{i\in I}$ be a $K$-frame and $\Phi=\{\phi_{i}\}_{i\in I}$ a $K^{*}$-frame. Then the following assertions hold.
\begin{enumerate}
\item \label{Re1}If   $R(T^{*}_{\Psi})\subseteq R(T^{*}_{\Phi}K^{*})$, then
 $\mathbb{M}_{1,\pi_{R(K)}\Psi,\Phi}$ has a $K$-right inverse in the form of multipliers.
\item \label{Re2} If   $R(T^{*}_{\Phi})\subseteq R(T^{*}_{\Psi}K)$, then
$\mathbb{M}_{1,\Psi,\Phi}$ on $R(K^{*})$ has a $K$-left inverse in the form of multipliers.
\end{enumerate}
\end{thm}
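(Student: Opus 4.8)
The plan is to push everything through the synthesis/analysis operators, convert the two range hypotheses into operator factorizations via Proposition \ref{k}, and then exploit the fact that $K$ (respectively $K^{*}$) is \emph{itself} a multiplier through the reproducing formula of the canonical dual. Throughout I assume $K$ has closed range, which is implicitly needed for the canonical $K$-dual and for the invertibility of $S_{\Psi}|_{R(K)}$ and $S_{\Phi}|_{R(K^{*})}$. First I would record the operator forms $\mathbb{M}_{1,\pi_{R(K)}\Psi,\Phi}=\pi_{R(K)}T_{\Psi}T_{\Phi}^{*}=:M$ and $\mathbb{M}_{1,\Psi,\Phi}=T_{\Psi}T_{\Phi}^{*}=:N$. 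Letting $\widetilde{\Psi}$ be the canonical $K$-dual of $\Psi$, the identity $\widetilde{\psi_{i}}=K^{*}(S_{\Psi}|_{R(K)})^{-1}\pi_{S_{\Psi}(R(K))}\psi_{i}$ gives $T_{\widetilde{\Psi}}=K^{*}PT_{\Psi}$ with $P:=(S_{\Psi}|_{R(K)})^{-1}\pi_{S_{\Psi}(R(K))}$, hence $T_{\widetilde{\Psi}}^{*}=T_{\Psi}^{*}P^{*}K$; crucially, the reproducing formula $Kf=\sum_{i}\langle f,\widetilde{\psi_{i}}\rangle\pi_{R(K)}\psi_{i}$ says exactly that $K=\mathbb{M}_{1,\pi_{R(K)}\Psi,\widetilde{\Psi}}$ is a multiplier.

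For assertion \eqref{Re1} I would apply Proposition \ref{k} to $L_{1}=T_{\Psi}^{*}$ and $L_{2}=T_{\Phi}^{*}K^{*}$: the hypothesis $R(T_{\Psi}^{*})\subseteq R(T_{\Phi}^{*}K^{*})$ produces $X\in B(\mathcal{H})$ with $T_{\Psi}^{*}=T_{\Phi}^{*}K^{*}X$. Now set $\mathcal{R}:=K^{*}XP^{*}K$. Then $T_{\Phi}^{*}\mathcal{R}=(T_{\Phi}^{*}K^{*}X)P^{*}K=T_{\Psi}^{*}P^{*}K=T_{\widetilde{\Psi}}^{*}$, so $M\mathcal{R}=\pi_{R(K)}T_{\Psi}T_{\widetilde{\Psi}}^{*}=K$, i.e. $\mathcal{R}$ is a $K$-right inverse. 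To see it has multiplier form, note that left multiplication by a bounded $W$ only changes the synthesis vectors, $W\mathbb{M}_{1,A,B}f=\sum_{i}\langle f,b_{i}\rangle Wa_{i}$. Writing $\mathcal{R}=WK$ with $W:=K^{*}XP^{*}$ and using $K=\mathbb{M}_{1,\pi_{R(K)}\Psi,\widetilde{\Psi}}$ yields $\mathcal{R}=\mathbb{M}_{1,W\pi_{R(K)}\Psi,\widetilde{\Psi}}$; its synthesis sequence $\{W\pi_{R(K)}\psi_{i}\}$ is Bessel because $W$ is bounded and $\{\pi_{R(K)}\psi_{i}\}$ is Bessel, and $\widetilde{\Psi}$ is Bessel, so this is a genuine multiplier.

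For assertion \eqref{Re2} I would mirror the argument with the roles of $K$ and $K^{*}$ interchanged. Let $\widetilde{\Phi}$ be the canonical $K^{*}$-dual of $\Phi$, so that $T_{\widetilde{\Phi}}=KQT_{\Phi}$ with $Q:=(S_{\Phi}|_{R(K^{*})})^{-1}\pi_{S_{\Phi}(R(K^{*}))}$, $K^{*}=\mathbb{M}_{1,\pi_{R(K^{*})}\Phi,\widetilde{\Phi}}$, and (adjointing $K^{*}=\pi_{R(K^{*})}T_{\Phi}T_{\widetilde{\Phi}}^{*}$) $Kf=T_{\widetilde{\Phi}}T_{\Phi}^{*}f$ for $f\in R(K^{*})$. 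Proposition \ref{k} applied to $T_{\Phi}^{*}$ and $T_{\Psi}^{*}K$ gives $Y\in B(\mathcal{H})$ with $T_{\Phi}^{*}=T_{\Psi}^{*}KY$, equivalently $T_{\Phi}=Y^{*}K^{*}T_{\Psi}$. Setting $\mathcal{L}:=KQY^{*}K^{*}$ I get $\mathcal{L}T_{\Psi}=KQ(Y^{*}K^{*}T_{\Psi})=KQT_{\Phi}=T_{\widetilde{\Phi}}$, so $\mathcal{L}N=T_{\widetilde{\Phi}}T_{\Phi}^{*}$ coincides with $K$ on $R(K^{*})$; hence $\mathcal{L}$ is a $K$-left inverse of $N$ on $R(K^{*})$. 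Absorbing $KQY^{*}$ into the synthesis side of $K^{*}=\mathbb{M}_{1,\pi_{R(K^{*})}\Phi,\widetilde{\Phi}}$ exhibits $\mathcal{L}=\mathbb{M}_{1,KQY^{*}\pi_{R(K^{*})}\Phi,\widetilde{\Phi}}$ as a multiplier.

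The step I expect to be the real content is the last one in each part: translating the abstract range inclusion into the factorization $T_{\Psi}^{*}=T_{\Phi}^{*}K^{*}X$ (resp. $T_{\Phi}^{*}=T_{\Psi}^{*}KY$) via Proposition \ref{k}, and then recognizing that the canonical dual's reproducing formula already presents $K$ (resp. $K^{*}$) as the multiplier $\mathbb{M}_{1,\pi_{R(K)}\Psi,\widetilde{\Psi}}$ (resp. $\mathbb{M}_{1,\pi_{R(K^{*})}\Phi,\widetilde{\Phi}}$). This is precisely what upgrades the bounded-operator inverse $WK$ (resp. $KQY^{*}K^{*}$) to multiplier form, since left multiplication by a bounded operator affects only the synthesis vectors. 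Once these two ingredients are in place, the verifications $M\mathcal{R}=K$ and $\mathcal{L}N=K|_{R(K^{*})}$ are short associativity computations; the only point requiring care is keeping track of the closed-range hypothesis on $K$.
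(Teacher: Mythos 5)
Your argument is correct, and it rests on the same two pillars as the paper's proof --- the factorization supplied by Proposition~\ref{k} and the reproducing identity of the canonical dual --- but the inverse is constructed and packaged in a genuinely different way. The paper names its candidate multiplier in advance: for part (1) it is $\mathbb{M}_{1,\Phi^{\dag},\widetilde{\Psi}}$ with $\Phi^{\dag}=\{(S_{\Phi}\mid_{R(K^{*})})^{-1}\pi_{S_{\Phi}(R(K^{*}))}\phi_{i}\}_{i\in I}$, so the synthesis sequence comes from $\Phi$ and the analysis sequence from $\Psi$; the factorization $T_{\Psi}^{*}=T_{\Phi}^{*}K^{*}X$ enters only in the middle of the verification, where $T_{\Phi}T_{\Phi}^{*}K^{*}=S_{\Phi}K^{*}$ is collapsed against $(S_{\Phi}\mid_{R(K^{*})})^{-1}\pi_{S_{\Phi}(R(K^{*}))}$ and then the substitution is undone. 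Part (2) is handled symmetrically with the left inverse $\mathbb{M}_{1,\widetilde{\Phi},\Psi^{\dag}}$, $\Psi^{\dag}=\{((S_{\Psi}\mid_{R(K)})^{-1})^{*}\pi_{R(K)}\psi_{i}\}_{i\in I}$. You instead build the inverse operator straight out of the Douglas factor ($\mathcal{R}=K^{*}XP^{*}K$, $\mathcal{L}=KQY^{*}K^{*}$), which makes the verifications one-line associativity computations, and you restore multiplier form afterwards by absorbing the bounded factor into the synthesis side of the multiplier representation $K=\mathbb{M}_{1,\pi_{R(K)}\Psi,\widetilde{\Psi}}$ (resp.\ $K^{*}=\mathbb{M}_{1,\pi_{R(K^{*})}\Phi,\widetilde{\Phi}}$). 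What the paper's route buys is an inverse that is canonical: it is built solely from $\Phi$, $\Psi$ and their canonical duals, with no reference to the non-unique and non-constructive operator $X$, and it pairs a $\Phi$-side synthesis sequence with a $\Psi$-side analysis sequence. What your route buys is brevity and complete transparency about where the range inclusion is used, at the cost of a synthesis sequence $\{W\pi_{R(K)}\psi_{i}\}_{i\in I}$, $W=K^{*}XP^{*}$, that depends on $X$ and draws both sequences from $\Psi$. It is worth noting that the two constructions actually yield the same operator: substituting $T_{\Psi}^{*}=T_{\Phi}^{*}K^{*}X$ into the paper's expression $\mathbb{M}_{1,\Phi^{\dag},\widetilde{\Psi}}=(S_{\Phi}\mid_{R(K^{*})})^{-1}\pi_{S_{\Phi}(R(K^{*}))}T_{\Phi}T_{\Psi}^{*}((S_{\Psi}\mid_{R(K)})^{-1})^{*}K$ collapses it to $K^{*}X((S_{\Psi}\mid_{R(K)})^{-1})^{*}K$, which is precisely your $\mathcal{R}$; so the difference is in the multiplier presentation, not in the operator. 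Finally, you are right --- and more explicit than the theorem's statement --- that $K$ must have closed range throughout; both proofs need this for $\pi_{R(K)}$, $\pi_{R(K^{*})}$ and for the invertibility of $S_{\Psi}\mid_{R(K)}$ and $S_{\Phi}\mid_{R(K^{*})}$.
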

\begin{proof}
(\ref{Re1}) First observe that the sequence $\{S_{\Phi}^{-1}\pi_{S_{\Phi}(R(K^{*}))}\phi_{i}\}_{i\in I}$, denoted by $\Phi^{\dag}$, is a Bessel sequence. Therefore,
\begin{eqnarray*}
\mathbb{M}_{1,\Phi^{\dag},\widetilde{\Psi}}f&=& \sum_{i\in I}\langle
f, K^{*}S_{\Psi}^{-1}\pi_{S_{\Psi({R(K)})}}\psi_{i}\rangle S_{\Phi}^{-1}\pi_{S_{\Phi({R(K^{*})})}}\phi_{i}\\
&=& S_{\Phi}^{-1}\pi_{S_{\Phi({R(K^{*})})}}\sum_{i\in I}\langle (S_{\Psi}^{-1})^{*}Kf,\psi_{i}\rangle \phi_{i}\\
&=& S_{\Phi}^{-1}\pi_{S_{\Phi({R(K^{*})})}}T_{\Phi}T_{\Psi}^{*}(S_{\Psi}^{-1})^{*}Kf.
\end{eqnarray*}
for all $f\in \mathcal{H}$. Now, if $R(T^{*}_{\Psi})\subseteq R(T^{*}_{\Phi}K^{*})$, then by using Proposition \ref{k}, there exists a bounded operator $X\in B(\mathcal{H})$ so that
 $T^{*}_{\Psi}=T_{\Phi}^{*}K^{*}X$. Thus,

\begin{eqnarray*}
\mathbb{M}_{1,\pi_{R(K)}\Psi,\Phi}\mathbb{M}_{1,\Phi^{\dag},
\widetilde{\Psi}}f&=&
\pi_{R(K)}T_{\Psi}T_{\Phi}^{*}S_{\Phi}^{-1}\pi_{S_{\Phi({R(K^{*})})}}T_{\Phi}
T_{\Psi}^{*}(S_{\Psi}^{-1})^{*}Kf\\
&=&\pi_{R(K)}T_{\Psi}T_{\Phi}^{*}S_{\Phi}^{-1}\pi_{S_{\Phi({R(K^{*})})}}T_{\Phi}
T_{\Phi}^{*}K^{*}X(S_{\Psi}^{-1})^{*}Kf\\
&=& \pi_{R(K)}T_{\Psi}T_{\Phi}^{*}K^{*}X(S_{\Psi}^{-1})^{*}Kf\\
&=& \pi_{R(K)}T_{\Psi}T_{\Psi}^{*}(S_{\Psi}^{-1})^{*}Kf\\
&=&S_{\Psi}^{*}(S_{\Psi}^{-1})^{*}Kf
=Kf.
\end{eqnarray*}
(\ref{Re2}) It is not difficult to see that the sequence $\{(S_{\Psi}^{-1})^{*}\pi_{R(K)}\psi_{i}\}_{i\in I}$, denoted by $\Psi^{\dag}$, is a Bessel sequence. This follows that
\begin{eqnarray*}
\mathbb{M}_{1,\widetilde{\Phi},\Psi^{\dag}}f&=&
\sum_{i\in I}\left\langle f,(S_{\Psi}^{-1})^{*}\pi_{R(K)}\psi_{i}
\right\rangle KS_{\Phi}^{-1}\pi_{S_{\Phi}(R(K^{*}))}\phi_{i}\\
&=& KS_{\Phi}^{-1}\pi_{S_{\Phi}(R(K^{*}))}\sum_{i\in I}\left\langle
S_{\Psi}^{-1}\pi_{S_{\Psi}(R(K))}f,\psi_{i}\right\rangle \phi_{i}\\
&=& KS_{\Phi}^{-1}\pi_{S_{\Phi}(R(K^{*}))}T_{\Phi}T_{\Psi}^{*}
S_{\Psi}^{-1}\pi_{S_{\Psi}(R(K))}f,
\end{eqnarray*}
for all $f\in \mathcal{H}$. Now, by using Proposition \ref{k}, there exists $X\in B(\mathcal{H})$ so that $T_{\Phi}^{*}=T_{\Psi}^{*}KX$. Therefore,
\begin{eqnarray*}
\mathbb{M}_{1,\widetilde{\Phi},\Psi^{\dag}}\mathbb{M}_{1,\Psi,\Phi}K^{*}&=&
KS_{\Phi}^{-1}\pi_{S_{\Phi}(R(K^{*}))}T_{\Phi}T_{\Psi}^{*}
S_{\Psi}^{-1}\pi_{S_{\Psi}(R(K))}T_{\Psi}T_{\Phi}^{*}K^{*}\\
&=&KS_{\Phi}^{-1}\pi_{S_{\Phi}(R(K^{*}))}T_{\Phi}T_{\Psi}^{*}
S_{\Psi}^{-1}\pi_{S_{\Psi}(R(K))}T_{\Psi}T_{\Psi}^{*}KXK^{*}\\
&=&KS_{\Phi}^{-1}\pi_{S_{\Phi}(R(K^{*}))}T_{\Phi}T_{\Psi}^{*}
S_{\Psi}^{-1}\pi_{S_{\Psi}(R(K))}S_{\Psi}KXK^{*}\\
&=&KS_{\Phi}^{-1}\pi_{S_{\Phi}(R(K^{*}))}T_{\Phi}T_{\Psi}^{*}
KXK^{*}\\
&=&KS_{\Phi}^{-1}\pi_{S_{\Phi}(R(K^{*}))}T_{\Phi}T_{\Phi}^{*}K^{*}\\
&=&KS_{\Phi}^{-1}\pi_{S_{\Phi}(R(K^{*}))}S_{\Phi}K^{*}
=KK^{*}.
\end{eqnarray*}
\end{proof}

\bibliographystyle{amsplain}

\end{document}